\newcommand{\topp}[1]{\left\lceil{#1}\right\rceil}
\newcommand{\bott}[1]{\left\lfloor{#1}\right\rfloor}
\newtheorem{theorem}{Theorem}[section]
\newtheorem*{theorema}{Theorem A}
\newtheorem*{theoremb}{Theorem B}
\newtheorem{lemma}[theorem]{Lemma}
\newtheorem{corollary}[theorem]{Corollary}
\newtheorem{proposition}[theorem]{Proposition}
\newcommand{\im}{\mathrm{im}}
\newcommand{\rinf}{\mathbb{R}^\infty}
\newcommand{\gr}{\mathcal{R}}
\newcommand{\gh}{\mathcal{H}}
\begin{document}

\title[Quasi-isometry and finite presentation]{Quasi-isometry and finite presentations of left cancellative monoids.}

\keywords{monoid, cancellative monoid, finitely generated, hyperbolic, semimetric space}
\subjclass[2000]{20M05; 05C20}
\maketitle

\begin{center}

    ROBERT D. GRAY\footnote{
Centro de \'{A}lgebra da Universidade de Lisboa, Av.~Prof.~Gama Pinto, 2, 1649--003 Lisboa, Portugal.
Email \texttt{rdgray@fc.ul.pt}. Research supported by
FCT and FEDER, project POCTI-ISFL-1-143 of Centro de \'{A}lgebra da
Universidade de Lisboa, and by the project PTDC/MAT/69514/2006. He
gratefully acknowledges the support of EPSRC grant EP/F014945/1 and the
hospitality of the University of Manchester during a visit to Manchester.} \ and \ MARK KAMBITES\footnote{School of Mathematics, University of Manchester, Manchester M13 9PL, England. Email \texttt{Mark.Kambites@manchester.ac.uk}.
Research supported by an RCUK Academic Fellowship and
by EPSRC grant EP/F014945/1.} \\
\end{center}

\begin{abstract}
We show that being finitely presentable, and being finitely presentable with
solvable word problem are quasi-isometry invariants of finitely generated
left cancellative monoids. Our main tool is an elementary, but useful,
geometric characterisation of finite presentability for left cancellative
monoids. We also give examples to show that this characterisation does not
extend to monoids in general, and indeed that properties such as solvable
word problem are not isometry invariants for general monoids.
\end{abstract}

\section{Introduction}\label{sec_intro}

A focus of much recent research has been the extent to which geometric methods
developed in group theory can be applied to wider classes of monoids and
semigroups. A key concept in geometric group theory is that of
\textit{quasi-isometry}: an equivalence relation on the class of metric
spaces which captures their ``large-scale'' geometry.

In recent papers \cite{K_semimetric,K_svarc}, we have introduced notions of
quasi-isometry for \textit{semimetric} spaces (spaces equipped with asymmetric, partially
defined distance functions), and hence for monoids. We proved a semigroup-theoretic
analogues of the \textit{\v{S}varc-Milnor lemma}, showing that a monoid acting
in a suitably controlled way by isometric embeddings on a semimetric space must be
quasi-isometric to that space (see \cite[Theorem~4.1]{K_svarc}
for a precise statement).

One of the main reasons quasi-isometry is important in geometric group
theory, is that the quasi-isometry type of a group is a geometric invariant
which encapsulates many important algebraic and combinatorial properties of
the group.
The aim of this note is to show that this is also true for left cancellative
monoids. For example, just as for groups, the existence of a finite
presentation is a quasi-isometry invariant of left cancellative, finitely generated monoids:
\begin{theorema}
Let $M$ and $N$ be left cancellative, finitely generated monoids which are
quasi-isometric. Then $M$ is finitely presentable if and only if $N$ is
finitely presentable.
\end{theorema}

Similarly, it is known that solvability of the word problem is a quasi-isometry
invariant of finitely presented groups \cite{Alonso90} (although it remains
open if it is a quasi-isometry invariant of more general finitely generated
groups \cite[Section 3.7]{Bowditch06}).
It transpires that the same
holds for left cancellative monoids.

\begin{theoremb}
Let $M$ and $N$ be left cancellative, finitely presentable monoids which
are quasi-isometric. Then $M$ has solvable word problem if and only if $N$
has solvable word problem.
\end{theoremb}

Our proofs, which are given in Section~\ref{sec_fp} below,
are in spirit similar to those known in the group case. They are not, however, 
entirely straightforward generalisations, since much of the standard geometric
machinery used in the group case must be replaced with ``directed'' analogues,
the theory of which is less well developed.

Our proof methods do not readily generalise to non-left-cancellative monoids, which seem
to be more fundamentally ``non-geometric'' objects, in the sense that
relatively little of their structure can be discerned even from their
exact Cayley graphs, let alone from their quasi-isometry types. To
illustrate this, in Section~\ref{sec_examples} we exhibit an uncountable
family of finitely generated monoids which are pairwise non-isomorphic and
differ in important respects (such as for example solvability of the word
problem), but which share exactly the same unlabelled Cayley graph. It
remains an open question whether finite presentability is even an
isometry invariant, let alone a quasi-isometry invariant, for finitely
generated monoids in general.

\section{Preliminaries}\label{sec_prelim}

In this section, we briefly recall some basic definitions which are
essential for considering monoids as geometric objects.

Let $\rinf$ denote the set
$\mathbb{R}^{\geq 0} \cup \lbrace \infty \rbrace$ of
non-negative real numbers with $\infty$ adjoined. We equip it with the
obvious order, addition and multiplication, leaving $0 \infty$ undefined.
Now let $X$ be a set. A function $d : X \times X \to \rinf$ is called a
\emph{semimetric} on $X$ if:
\begin{itemize}
\item[(i)] $d(x,y)=0$ if and only if $x=y$; and
\item[(ii)] $d(x,z) \leq d(x,y) + d(y,z)$;
\end{itemize}
for all $x,y,z \in X$. A set equipped with a semimetric on it is a
\textit{semimetric space}. A useful example of a semimetric space is a
directed graph, with the distance between two vertices defined to be
the length of the shortest directed path between them, or $\infty$ if
there is no such path.

Now let $f : X \to Y$ be a map between semimetric spaces $X$ and $Y$. Write
$d_X$ and $d_Y$ for the semimetrics on $X$ and $Y$ respectively. If
$d_Y(f(x),f(y)) = d_X(x,y)$ for all $x, y \in X$ then $f$ is called an
\textit{isometric embedding}; if in addition $f$ is surjective then $f$ is
an isometry. More generally, let 
$1 \leq \lambda < \infty$, $0 \leq \mu < \infty$ and $0 < \epsilon < \infty$ be constants.
The map $f$ is called a \emph{$(\lambda,\epsilon)$-quasi-isometric embedding},
and $X$ \textit{embeds quasi-isometrically in $Y$}, if
\[
\frac{1}{\lambda}\;d_X(x,y) - \epsilon \leq
d_Y(f(x), f(y)) \leq \lambda d_X(x,y) + \epsilon
\]
for all $x,y \in X$.

A subset $Z \subseteq Y$ is called \textit{$\mu$-quasi-dense} if for every
$y \in Y$ there exists a $z \in Z$ with $d_Y(y,z) \leq \mu$ and $d_Y(z,y) \leq \mu$. 
If $f : X \to Y$ is a $(\lambda,\epsilon)$-quasi-isometric embedding
and its image is $\mu$-quasi-dense, then $f$ is 
called a \textit{$(\lambda,\epsilon,\mu)$-quasi-isometry}, and the spaces
$X$ and $Y$ are said to be \textit{quasi-isometric}. Quasi-isometry forms
an equivalence relation on the class of semimetric spaces
\cite[Proposition~1]{K_semimetric}.
A semimetric space is called \textit{quasi-metric} if it is quasi-isometric
to a metric space, or equivalently \cite[Proposition~2]{K_semimetric} if there are
constants $\lambda, \mu < \infty$ such that $d(x,y) \leq \lambda d(y,x) + \mu$
for all points $x$ and $y$.

Now let $M$ be a monoid generated by a finite subset $S$. Then $M$ is naturally
endowed with the structure of a directed graph, with vertices the elements
of $M$, and an edge from $x$ to $y$ if and only if there is a generator
$s \in S$ such that $xs = y$ in $M$. This graph is called the \textit{(right)
Cayley graph of $M$ with respect to the generating set $S$}. The Cayley graph
in turn has the
structure of a semimetric space, as described above, with $d_S(x,y)$ being
the shortest length of a word $w$ over the generating set $S$ such that
$xw = y$ in $M$, or $\infty$ if there is no such word.

Of course different choices of finite generating set for $M$ will lead
to different graphs and different semimetric spaces, but
two different finite generating sets for the same monoid
will always give rise to quasi-isometric spaces \cite[Proposition~4]{K_semimetric}. In other words,
provided a monoid admits a finite generating set, its quasi-isometry class
is an invariant, and so it makes sense to speak of two abstract 
finitely generated
monoids
being quasi-isometric.

Given two functions $f, g : \mathbb{N}\to \mathbb{N}$ we write $f \prec g$ if there exists
a constant $a$ such that $f(j) \leq ag(aj) + aj$ for all $j$. The functions
$f$ and $g$ are said to be of the same \textit{type}, written $f \sim g$, if $f \prec g$ and
$g \prec f$.

Now fix a monoid presentation $\langle A \mid R \rangle$.
If $u$ and $v$ are equivalent words then the \textit{area} $A(u,v)$ is
the smallest number of applications of relations from $R$ necessary to
transform $u$ into $v$.
The \textit{Dehn function} of a presentation $\langle A \mid R \rangle$ is
the function
$\delta : \mathbb{N} \to \mathbb{N}$ given by
$$\delta(n) = \max \lbrace A(u,v) \mid u,v \in A^*, u \equiv_R v, |u|+|v| \leq n \rbrace.$$
The Dehn function is a measure of the complexity of transformations between
equivalent words. 
The Dehn function depends on the presentation, but if $\delta$ and $\gamma$
are Dehn functions of different finite presentations for the same monoid
then $\delta \sim \gamma$.

\section{Geometric Nature of Finite Presentability}\label{sec_fp}

In this section we describe an elementary, but very useful, geometric
property which, when applied to Cayley graphs, characterises finite
presentability for left cancellative monoids. We then show that this
property is invariant under quasi-isometry, from which follows the result
that finite presentability is a quasi-isometry invariant of finitely
generated left cancellative monoids.

We shall need
the notion of a \textit{directed $2$-complex}, which was introduced by
Guba and Sapir \cite{Guba06}.
For every directed graph $\Gamma$ let $P(\Gamma)$ be the set of all directed
paths in $\Gamma$, including the empty paths. We write $\iota p$ and $\tau p$
for the start and end vertex respectively of a path $p$. A pair of paths
$p,q \in P(\Gamma)$ are said to be \textit{parallel}, written
$p \parallel q$, if $\iota p = \iota q$ and $\tau p = \tau q$. 

A {\em directed $2$-complex}
is a directed graph $\Gamma$ equipped with a set $F$ (called the {\em set
of $2$-cells}), and three maps $\topp{\cdot}\colon F \to P$,
$\bott{\cdot}\colon F \to P$, and $^{-1}\colon\to F$ called {\em top},
{\em bottom}, and {\em inverse} such that
\begin{itemize}
\item for every $f\in F$, the paths $\topp{f}$ and $\bott{f}$ are parallel;
\item $^{-1}$ is an involution without fixed points, and
$\topp{f^{-1}}=\bott{f}$, $\bott{f^{-1}}=\topp{f}$ for every $f\in F$.
\end{itemize}

If $K$ is a directed $2$-complex, then paths on $K$ are called
{\em $1$-paths}. The initial and
terminal vertex of a $1$-path $p$ are denoted by $\iota(p)$ and
$\tau(p)$, respectively. For every $2$-cell $f\in F$, the vertices
$\iota(\topp{f})=\iota(\bott{f})$ and $\tau(\topp{f})=\tau(\bott{f})$ are
denoted $\iota(f)$ and $\tau(f)$, respectively.

An \emph{atomic $2$-path} is a triple $(p,f,q)$, where $p$, $q$
are $1$-paths in $K$, and $f \in F$ such that $\tau(p) = \iota(f)$,
$\tau(f) = \iota(q)$. If $\delta$ is an atomic $2$-path then we use $\topp{\delta}$ to denote $p \topp{f} q$ and $\bott{\delta}$ is denoted by $p \bott{f} q$, these are the top and bottom $1$-paths of the atomic $2$-path. A
$2$-path $\delta$ in $K$ of length $n$ is then a sequence of atomic paths
$\delta_1$, $\ldots$, $\delta_n$, where $\bott{\delta_i} = \topp{\delta_{i+1}}$ for every $1 \leq i < n$. The top and bottom $1$-paths of $\delta$, denoted $\topp{\delta}$ and $\bott{\delta}$ are then defined as $\topp{\delta_1}$ and $\bott{\delta_n}$, respectively. 

We use $\delta \circ \delta'$ to denote the composition of two $2$-paths.
We say that $1$-paths $p$, $q$ in $K$ are \textit{homotopic} if there exists a
$2$-path $\delta$ such that $\topp{\delta} = p$ and $\bott{\delta} = q$. 
We say that a directed $2$-complex $K$ is \emph{directed simply connected}
if for every pair of parallel paths $p \parallel q$, $p$ and $q$ are
homotopic in $K$. 

Let $K$ be a directed $2$-complex with underlying directed graph $\Gamma$
and set of $2$-cells $F$. Let $p$ and $q$ be parallel paths in $\Gamma$,
and let $K'$ be the $2$-complex obtained from $K$ by
adjoining two new elements $f$ and $f'$ to $F$ satisfying $\topp{f} = \bott{f'} = p$,
$\bott{f} = \topp{f'} = q$. We call $K'$ the directed $2$-complex
\textit{obtained from $K$ by adjoining cells for the paths $p$ and $q$}.  

Given a directed graph $\Gamma$ and natural number $n$ we define a directed
$2$-complex $K_n(\Gamma)$ with face set
$$F = \lbrace (p, q) \mid p \textrm{ and } q \textrm{ are parallel paths in } \Gamma \textrm{ with } |p| + |q| \leq n \rbrace$$
and $\topp{(p,q)} = p$, $\bott{(p,q)} = q$ and $(p,q)^{-1} = (q,p)$.
For $n \in \mathbb{N}$, we say that a directed graph $\Gamma$ is
\textit{$n$-quasi-simply-connected} if $K_n(\Gamma)$ is directed
simply connected.  We say that $\Gamma$ is \textit{quasi-simply-connected}
if it is $n$-quasi-simply-connected for some $n \in \mathbb{N}$.

The directed $2$-complex $K_n(\Gamma)$ is the natural directed analogue
of the \emph{Rips complex}, and
Theorem~\ref{thm_LeftCanFPGeom} below is the cancellative monoid analogue
of the well-known result in geometric group theory which states that a
group $G$ with generating set $A$ is finitely presented if and only if
the Rips complex $Rips_r(G,A)$ is simply connected for $r$ large enough;
see \cite[Chapter~4]{delaHarpe00}. 

Let $K_n(\Gamma)$ be a directed simply connected $2$-complex.
For each pair of parallel paths $p \parallel q$ in $\Gamma$ define the
area $A_{K_n(\Gamma)}(p,q)$ to be the minimum length of a $2$-path from
$p$ to $q$ in $K_n(\Gamma)$. The Dehn function
$\gamma: \mathbb{N} \rightarrow \mathbb{Z}^+ \cup \{ \infty \}$ of $K_n(\Gamma)$ is defined by
\[
\gamma(i) = \sup\{ A_{K_n(\Gamma)}(p,q) \ \mbox{in $K_n(\Gamma)$}: p,q \in \Gamma, p \parallel q, |p|+|q| \leq i \}, 
\]  
where the supremum of an unbounded set is taken to be $\infty$.

\begin{theorem}
\label{thm_LeftCanFPGeom}
Let $S$ be a left cancellative monoid generated by a finite set $A$. Then
$S$ is finitely presented if and only if the right Cayley graph
$\Gamma_r(S,A)$ is quasi-simply-connected. Moreover, if $\Gamma_r(S,A)$ is
$n$-quasi-simply-connected then $K_n(\Gamma_r(S,A))$ has Dehn function
equivalent to the Dehn function of $S$.
\end{theorem}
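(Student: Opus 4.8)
The proof hinges on a single structural observation, and this is the only place where left cancellativity is used. Regard a directed path in the right Cayley graph $\Gamma := \Gamma_r(S,A)$ as a word over $A$ (the sequence of its edge-labels), and write $u =_S v$ to mean that words $u,v$ represent the same element of $S$. The observation is: for any $x \in S$, the path based at $x$ spelling $u$ is parallel to the path based at $x$ spelling $v$ if and only if $u =_S v$ --- and, crucially, this holds independently of the basepoint $x$. Indeed $xu = xv$ forces $u =_S v$ on cancelling $x$ on the left, while the converse is immediate. The plan is then to fix $n$ and compare the complex $K_n(\Gamma)$ with the finite monoid presentation $\langle A \mid R_n \rangle$, where $R_n = \{(u,v) : u,v \in A^*,\ |u|+|v| \le n,\ u =_S v\}$; I will show that $\Gamma$ is $n$-quasi-simply-connected if and only if $\langle A \mid R_n \rangle$ presents $S$, and that when this holds $K_n(\Gamma)$ and $\langle A \mid R_n \rangle$ have equal (hence equivalent) Dehn functions. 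The theorem follows at once, since $S$ is finitely presentable with generating set $A$ precisely when it is presented by some $\langle A \mid R_n \rangle$, and all finite presentations of $S$ have equivalent Dehn functions.

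For the forward implication I would begin with a finite presentation $\langle A \mid R \rangle$ of $S$ and set $n = \max\{|u|+|v| : (u,v) \in R\}$. Let $p \parallel q$ be parallel paths in $\Gamma$, spelled by words $u$ and $v$ from a common basepoint $x$; by the observation $u =_S v$, so there is a sequence of elementary $R$-rewrites $u = w_0 \to w_1 \to \dots \to w_k = v$, the $i$-th replacing a factorisation $w_i = s\alpha t$ by $w_{i+1} = s\beta t$ with $(\alpha,\beta) \in R^{\pm 1}$. Reading each $w_i$ as the path from $x$ it spells, the $i$-th rewrite is realised by a single atomic $2$-path whose middle $2$-cell is the pair of parallel subpaths spelled by $\alpha$ and $\beta$ --- a genuine $2$-cell of $K_n(\Gamma)$, since these subpaths have total length $|\alpha|+|\beta| \le n$. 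Concatenating these atomic $2$-paths gives a $2$-path from $p$ to $q$, so $K_n(\Gamma)$ is directed simply connected and $\Gamma$ is $n$-quasi-simply-connected.

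Conversely, suppose $\Gamma$ is $n$-quasi-simply-connected. Every relation of $R_n$ holds in $S$, so it suffices to prove that $u =_S v$ implies $u \equiv_{R_n} v$. Take $p$ and $q$ to be the paths from the identity vertex spelling $u$ and $v$; they are parallel, hence homotopic in $K_n(\Gamma)$ via a $2$-path $\delta = \delta_1 \circ \dots \circ \delta_k$. Each atomic $2$-path $\delta_j$ replaces, inside its ambient $1$-path, a subpath $\alpha_j$ by a parallel subpath $\beta_j$ with $|\alpha_j|+|\beta_j| \le n$; spelling everything, the words read off $\topp{\delta_j}$ and $\bott{\delta_j}$ differ by exactly one application of the relation $(\alpha_j,\beta_j) \in R_n$ (using the observation to see this pair lies in $R_n$). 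Since $\topp{\delta_1}$ spells $u$, $\bott{\delta_k}$ spells $v$, and $\bott{\delta_j} = \topp{\delta_{j+1}}$, chaining these $k$ steps yields $u \equiv_{R_n} v$. Hence $S$ is presented by the finite set $R_n$, so $S$ is finitely presentable.

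The Dehn-function statement falls out of the fact that both translations are length-exact. The argument of the previous paragraph converts a $2$-path of length $\ell$ from $p$ to $q$ into an $R_n$-derivation of length $\ell$ between the words they spell, so $A_{R_n}(u,v) \le A_{K_n(\Gamma)}(p,q)$ whenever $p,q$ spell $u,v$; and the construction in the forward implication converts an $R_n$-derivation of length $\ell$ into a $2$-path of the same length, giving the reverse inequality. Thus $A_{R_n}(u,v) = A_{K_n(\Gamma)}(p,q)$ for such $p,q$, and --- using that this common value, and the path lengths, depend only on the spelling words and not on the basepoint --- taking suprema over inputs of size at most $i$ shows that the Dehn function of $K_n(\Gamma)$ equals that of $\langle A \mid R_n \rangle$, which by the previous paragraph is a finite presentation of $S$ and so has Dehn function equivalent to that of $S$. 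I expect the genuine difficulty to lie not in any single step but in the bookkeeping: passing cleanly back and forth between paths in $\Gamma$ and the words spelling them, tracking basepoints throughout (in contrast to the group case, $K_n(\Gamma)$ encodes all of $\Gamma$ simultaneously rather than just a ball about the identity), and checking that the atomic moves on the two sides of each translation correspond one-for-one. It is precisely the basepoint-independence in the structural observation that makes all of this work, and that is where left cancellativity genuinely enters.
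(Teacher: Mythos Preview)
Your proposal is correct and follows essentially the same route as the paper's own proof: both directions pass between $R$-derivations and $2$-paths in $K_n(\Gamma)$ in the length-preserving way you describe, with left cancellativity invoked exactly where you indicate. Your framing is marginally cleaner---you isolate the basepoint-independence observation up front and phrase the core equivalence as ``$\Gamma$ is $n$-quasi-simply-connected iff $\langle A \mid R_n\rangle$ presents $S$ with equal Dehn functions''---but the content is identical to the paper's argument.
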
 
\begin{proof}
First suppose that $S$ is presented by a finite presentation
$\langle A \mid R \rangle$, with Dehn function $\delta : \mathbb{N} \to \mathbb{N}$. Then it is not hard to see that the right
Cayley graph $\Gamma = \Gamma_r(S,A)$ is quasi-simply-connected with 
\[
n = \max \{ |u| + |v| : (u=v) \in R \}. 
\]   
Indeed, let $p, q \in P(\Gamma)$ with $p \parallel q$ and let $w_p$ and
$w_q$ be the words labelling the paths $p$ and $q$ respectively. Since
$S$ is left cancellative, $w_p=w_q$ in $S$, and so there is a
finite sequence of applications of relations from $R$ that transforms
$w_p$ into
$w_q$. Moreover, this sequence can be chosen to have length
at most $\delta(|w_p| + |w_q|) = \delta(|p| + |q|)$. This sequence gives rise in
a natural way to a $2$-path, of the same length, in $K_n(\Gamma)$ from
$p$ to $q$. Thus, $K_n(\Gamma)$ is directed simply connected with Dehn
function bounded above by $\delta$.

Conversely, suppose we are given that $\Gamma = \Gamma_r(S,A)$ is
$n$-quasi-simply-connected. Suppose $K_n(\Gamma)$ has
Dehn function bounded above by $\omega : \mathbb{N} \to \mathbb{N}$. Let $R$ be the set of all
relations $u=v$ over $A$
holding in $S$ with $|u| + |v| \leq n$. Since $A$ is finite, $R$ is finite.
We claim that $\langle A \mid R \rangle$ defines the monoid $S$. By
definition all of these relations hold in $S$, so we need only show this
set of relations is sufficient to define $S$. Given $\alpha, \beta \in A^*$
such that $\alpha = \beta$ in $S$, let $p_{\alpha}$, $p_{\beta}$ be the
paths in $\Gamma$ labelled by $\alpha$, $\beta$ respectively
and with $\iota p_{\alpha} = \iota p_{\beta} = 1_S$ and
$\tau p_{\alpha} = \tau p_{\beta} = \alpha = \beta$. By assumption
$K_n(\Gamma)$ is directed simply connected, so there is a $2$-path
from $p_\alpha$ to $p_\beta$ in $K_n(\Gamma)$, of length at most
$\omega(|p_\alpha| + |p_\beta|) = \omega(|\alpha| + |\beta|$).

Now for any face $f$ in this $2$-path, $\bott{f}$ and $\topp{f}$ are
parallel paths in $\Gamma$, which since $S$ is left cancellative means
that their labels represent the same element of $S$. Moreover, since
$f$ is a face in $K_n(\Gamma)$, their labels have total length less than
$n$, and hence form the two sides of a relation in $R$. It follows that the
$2$-path corresponds to a sequence of applications of relations from $R$
which transforms the word $\alpha$ into the word $\beta$.

Moreover, this sequence has length at most $\omega(|\alpha| + |\beta|)$.
Thus, $S$ is finitely presented with Dehn function bounded above by $\omega$.
Finally, since $S$ is finitely presented, we may now apply the first part of
the proof again to deduce that $\omega$ is bounded above by the Dehn function
for the presentation, which means that the two Dehn functions are equal.
\end{proof}

It is natural to ask to what extent the left cancellativity assumption in the above theorem really is necessary. As it turns out, for finitely generated monoids in general being quasi-simply connected is neither a
necessary nor a sufficient condition for the existence of a finite presentation. 
In Section~\ref{sec_examples} below, we shall see an example of a finitely
generated monoid with Cayley graph which is a directed tree, but which is
not finitely presented. This shows that being quasi-simply-connected is not
sufficient to imply a finite presentation in general. Also in
Section~\ref{sec_examples} we shall construct an example of a
finitely presented monoid whose Cayley graph is not quasi-simply connected.

We shall now show that quasi-simply-connectedness is a quasi-isometry
invariant of directed graphs, from which it will follow that finite
presentability is a quasi-isometry invariant of finitely generated
left cancellaive monoids. 

The following general lemma will prove useful for us. 

\begin{lemma}[Quasi-inverses]
\label{lem_quasiinverses}
Let $X$ and $Y$ be quasi-isometric semimetric spaces. 
Then there exist constants $\lambda$, $\epsilon$ and $\mu$ and a pair
of $(\lambda,\epsilon,\mu)$-quasi-isometries $f:X \rightarrow Y$ and $g:Y \rightarrow X$ satisfying the following properties:
\begin{enumerate}
\item[(i)] $d(y,fg(y)) \leq \mu$ and $d(fg(y),y) \leq \mu$ for all $y \in Y$;
\item[(ii)] $d(x,gf(x)) \leq \mu$ and $d(gf(x),x) \leq \mu$ for all $x \in X$;
\item[(iii)] $gfg(y)=g(y)$ for all $y \in Y$;
\item[(iv)] $fgf(x)=f(x)$ for all $x \in X$. 
\end{enumerate}
\end{lemma}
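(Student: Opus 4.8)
The plan is to build $g$ directly out of a single quasi-isometry $f\colon X\to Y$, arranging from the outset two ``collapsing'' features that will force the algebraic identities (iii) and (iv). Since $X$ and $Y$ are quasi-isometric, fix a $(\lambda_0,\epsilon_0,\mu_0)$-quasi-isometry $f\colon X\to Y$. Using the axiom of choice, choose a transversal $X'\subseteq X$ meeting each nonempty fibre $f^{-1}(y)$ (for $y\in f(X)$) in exactly one point, and let $\phi\colon f(X)\to X'$ be the inverse of the bijection $f|_{X'}$, so that $f(\phi(z))=z$ for every $z\in f(X)$. Since $f(X)$ is $\mu_0$-quasi-dense in $Y$, choose a map $\pi\colon Y\to f(X)$ with $\pi(z)=z$ for all $z\in f(X)$ and with $d(y,\pi(y))\le\mu_0$ and $d(\pi(y),y)\le\mu_0$ for all $y\in Y$. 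Finally define $g=\phi\circ\pi\colon Y\to X$.

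With this definition the four conditions are checked by direct computation, using repeatedly the two ``idempotency'' facts $f\circ\phi=\mathrm{id}_{f(X)}$ and $\pi|_{f(X)}=\mathrm{id}_{f(X)}$. For (i): $fg(y)=f(\phi(\pi(y)))=\pi(y)$, so $d(y,fg(y))=d(y,\pi(y))\le\mu_0$ and $d(fg(y),y)\le\mu_0$. For (ii): $gf(x)=\phi(\pi(f(x)))=\phi(f(x))$ lies in the same $f$-fibre as $x$, so $d(f(x),f(gf(x)))=0$, and since $f$ is a $(\lambda_0,\epsilon_0)$-quasi-isometric embedding this forces $d(x,gf(x))\le\lambda_0\epsilon_0$ and $d(gf(x),x)\le\lambda_0\epsilon_0$. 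For (iii): $g(y)\in X'$, so $fg(y)=\pi(y)\in f(X)$, whence $gfg(y)=\phi(\pi(\pi(y)))=\phi(\pi(y))=g(y)$. For (iv): $f(x)\in f(X)$, so $gf(x)=\phi(f(x))$, whence $fgf(x)=f(\phi(f(x)))=f(x)$.

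It then remains only to confirm that $g$ is a quasi-isometry for suitable (possibly enlarged) constants, which is routine estimation. That $g$ is a quasi-isometric embedding follows by sandwiching: the quasi-isometric embedding inequalities for $f$ make $d(g(y_1),g(y_2))$ comparable to $d(fg(y_1),fg(y_2))=d(\pi(y_1),\pi(y_2))$, which by the triangle inequality and the bounds on $\pi$ differs from $d(y_1,y_2)$ by at most $2\mu_0$; combining yields a $(\lambda_0,\lambda_0(\epsilon_0+2\mu_0))$-quasi-isometric embedding. Its image is exactly $X'$ (indeed $\pi$ maps onto $f(X)$ since it fixes $f(X)$ pointwise, and $\phi$ carries $f(X)$ onto $X'$), and $X'$ is $\lambda_0\epsilon_0$-quasi-dense in $X$ by the estimate obtained in (ii). Choosing $\lambda$, $\epsilon$ and $\mu$ to dominate the finitely many constants appearing above makes $f$ and $g$ simultaneously $(\lambda,\epsilon,\mu)$-quasi-isometries while preserving (i)--(iv). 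The one genuinely non-formal point is the realisation that (iii) and (iv) can be imposed at all; once one decides to define $g$ as ``first move $y$ to a nearby point of $f(X)$, then cross to the fibre transversal'', the rest is bookkeeping.
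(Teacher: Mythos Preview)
Your proof is correct and follows essentially the same construction as the paper, with the roles of $f$ and $g$ interchanged: the paper begins with a given quasi-isometry $g\colon Y\to X$ and builds $f$ by first choosing a section of $g$ on $\im(g)$ and then extending via a ``nearest image point'' map $x\mapsto\hat{x}$, which is exactly your $\phi$ and $\pi$ with $X$ and $Y$ swapped. Your presentation is if anything slightly tidier, since you explicitly require $\pi$ to fix $f(X)$ pointwise (the paper leaves implicit that $\hat{x}=x$ for $x\in\im(g)$), which is precisely what makes the verifications of (iii) and (iv) go through cleanly.
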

\begin{proof}
Let $g : Y \rightarrow X$ be a $(\lambda',\epsilon',\mu')$-quasi-isometry. 
For every point $x \in X$ choose and fix $\hat{x} \in \im(g)$ satisfying
$d(x,\hat{x}) \leq \mu'$ and $d(\hat{x},x) \leq \mu'$. Now define a map
$f : X \rightarrow Y$ by choosing for each $z \in \im(g)$ a point
$f(z)$ such that $g(f(z)) = z$, and then extend to the whole of $X$ by
setting $f(x) = f(\hat{x})$ for all $x \in X$.

Then straightforward calculations show that for all $a,b \in X$ we have 
\[
d(f(a),f(b)) \geq \frac{1}{\lambda'}d(a,b) - \frac{(\epsilon' + 2\mu')}{\lambda'}, 
\]
and
\[
d(f(a),f(b)) \leq \lambda' d(a,b) + \lambda'(\epsilon' + 2\mu')
\]
so that $f$ is a quasi-isometric embedding.
Also, for all $y \in Y$,  
\[
d(y,f(g(y)) \leq \epsilon' 
\quad \mbox{and} \quad 
d(f(g(y),y) \leq \epsilon' 
\]
therefore $\im f$ is quasi-dense and $f$ is a 
$(\lambda', \sigma, \epsilon' + 1)$-quasi-isometry where 
\[
\sigma = \max\left(
\frac{(\epsilon' + 2\mu')}{\lambda'},
\lambda'(\epsilon' + 2\mu')
\right).
\]
Moreover, for all $x \in X$ and $y \in Y$ we have
\[
d(x,gf(x)) \leq \mu', \quad 
d(gf(x),x) \leq \mu',
\]
\[
gfg(y) = g(y) 
\ \mbox{and} \ 
fgf(x) = f(x). 
\]
It follows that $f$ and $g$ are $(\lambda,\epsilon,\mu)$-quasi-isometries satisfying the conditions given in the statement of the lemma where 
\[
\lambda = \lambda',
\quad
\epsilon = \max(\epsilon',\sigma),
\quad \mbox{and} \quad
\mu = \max(\mu',\epsilon'+1), 
\]
as required. 
\end{proof}

\begin{lemma}
\label{lem_digraphs}
Let $\Gamma$ and $\Delta$ be simple directed graphs and let $f:\Gamma \rightarrow \Delta$ and $g:\Delta \rightarrow \Gamma$ be $(\lambda,\epsilon,\mu)$-quasi-isometries satisfying (i)-(iv) from Lemma~\ref{lem_quasiinverses}. 
Suppose $K_n(\Gamma)$ is directed simply connected. Then $K_m(\Delta)$ is directly simply connected where
$m = \mathrm{max}(\lambda^2 + (\lambda + 1)\epsilon + 2\mu + 1, (\lambda + \epsilon)n )$. 

If, moreover, $K_n(\Gamma)$ and $K_m(\Delta)$ have Dehn functions $\gamma$
and $\delta$ respectively, then $\delta \sim \gamma$.
\end{lemma}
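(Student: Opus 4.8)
The plan is to show directly that $K_m(\Delta)$ is directed simply connected by transporting a filling problem from $\Delta$ into $\Gamma$, solving it there using the hypothesis, and carrying the solution back to $\Delta$; the length estimates produced by this construction, together with a short symmetry argument, then give $\delta\sim\gamma$. Because $f$ and $g$ need not carry edges to edges, I would first fix, for every edge of $\Delta$ from $x$ to $y$, a directed path in $\Gamma$ from $g(x)$ to $g(y)$ of length at most $d_\Gamma(g(x),g(y))\le\lambda+\epsilon$, and symmetrically fix short directed paths in $\Delta$ for the edges of $\Gamma$. Concatenating along a path yields an ``image'' map $g_*$ from directed paths of $\Delta$ to directed paths of $\Gamma$, and likewise $f_*$ in the other direction; each preserves concatenation and parallelism and multiplies lengths by at most $\lambda+\epsilon$. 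Being concatenation-compatible, $f_*$ sends atomic $2$-paths to atomic $2$-paths: a $2$-cell $(u,w)$ of $K_n(\Gamma)$ is carried to $(f_*u,f_*w)$, which lies in $K_{(\lambda+\epsilon)n}(\Delta)$ because $|f_*u|+|f_*w|\le(\lambda+\epsilon)(|u|+|w|)\le(\lambda+\epsilon)n$; hence $f_*$ carries a $2$-path of $K_n(\Gamma)$ to a $2$-path of the same length in $K_{(\lambda+\epsilon)n}(\Delta)$, and similarly for $g_*$. Using property~(i) I would also fix, for each vertex $v$, directed paths $v\to fg(v)$ and $fg(v)\to v$ of length at most $\mu$.

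Now take parallel paths $p\parallel q$ in $\Delta$, say $p=e_1\cdots e_k$ with $e_i\colon v_{i-1}\to v_i$. Since $g_*p\parallel g_*q$ in $\Gamma$ and $K_n(\Gamma)$ is directed simply connected, there is a $2$-path $\sigma$ from $g_*p$ to $g_*q$ of length at most $\gamma(|g_*p|+|g_*q|)\le\gamma\big((\lambda+\epsilon)(|p|+|q|)\big)$; transporting $\sigma$ through $f_*$ and padding by the fixed endpoint-correction paths produces a $2$-path in $K_{(\lambda+\epsilon)n}(\Delta)$ between a ``push-and-pull-back'' copy of $p$ and the corresponding copy of $q$. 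It remains to homotope $p$ to this copy of $p$ inside $K_m(\Delta)$ (and likewise for $q$), which I would do with a ``ladder'' processing $p$ one edge at a time: at step $i$ it compares the path $(fg(v_i)\to v_i)\,e_{i+1}$, of length at most $\mu+1$, with a path $t_{i+1}\,(fg(v_{i+1})\to v_{i+1})$, where $t_{i+1}$ is a short directed path from $fg(v_i)$ to $fg(v_{i+1})$. The key estimate is
\[
d_\Delta\big(fg(v_i),fg(v_{i+1})\big)\ \le\ \lambda\,d_\Gamma\big(g(v_i),g(v_{i+1})\big)+\epsilon\ \le\ \lambda(\lambda+\epsilon)+\epsilon\ =\ \lambda^2+(\lambda+1)\epsilon,
\]
so each rung of the ladder is a $2$-cell of $K_m(\Delta)$ as soon as $m$ is at least the first quantity in the statement (a single further cell absorbs an initial discrepancy of length at most $2\mu$). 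Concatenating the ladder for $p$, the transported filling, and the reversed ladder for $q$ gives a $2$-path from $p$ to $q$ in $K_m(\Delta)$ of length at most $|p|+|q|+\gamma\big((\lambda+\epsilon)(|p|+|q|)\big)$ plus an additive constant; this proves $K_m(\Delta)$ is directed simply connected and, on reading off the length bound, gives $\delta\prec\gamma$.

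For the reverse comparison $\gamma\prec\delta$ I would appeal to symmetry: conditions~(i)--(iv) of Lemma~\ref{lem_quasiinverses} are preserved under interchanging $(\Gamma,f)$ with $(\Delta,g)$, so---now that $K_m(\Delta)$ is known to be directed simply connected---the construction above with these roles reversed shows that $K_{m'}(\Gamma)$ is directed simply connected with Dehn function $\gamma'\prec\delta$, where $m'=\max(\lambda^2+(\lambda+1)\epsilon+2\mu+1,\,(\lambda+\epsilon)m)$. One then uses the elementary fact that, for a fixed directed graph and $n\le N$ with $K_n$ directed simply connected, the Dehn functions of $K_n$ and $K_N$ are $\sim$-equivalent (one inequality is immediate since every cell of $K_n$ is a cell of $K_N$; for the reverse, each cell of $K_N$ is simulated by a $2$-path of uniformly bounded length in $K_n$) and takes $N=\max(n,m')$ to conclude $\gamma\sim\gamma'\prec\delta$, hence $\delta\sim\gamma$. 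I expect the only genuinely delicate part of the write-up to be the bookkeeping behind the assertion that a single value of $m$ suffices: one must simultaneously bound the sizes of all cells appearing in the two ladders, in the padding, and in the transported filling---in particular reconciling the endpoints of the transported filling with the copies of $p$ and $q$ produced by the ladders---and it is this accounting that pins down the stated value of $m$.
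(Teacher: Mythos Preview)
Your approach is essentially the paper's: transport $p\parallel q$ to $\Gamma$ via $g$, fill there, push the filling back via $f$, and bridge $p$ (and $q$) to the pushed-back copy with a ladder of small cells; then swap roles for the reverse Dehn inequality. One point deserves sharpening. As written, your ladder rung $t_{i+1}$ is an \emph{arbitrary} short path from $fg(v_i)$ to $fg(v_{i+1})$, so the ladder terminates at $\sigma_0\,t_1\cdots t_k\,\sigma_k^{-1}$, whereas the transported filling $f_*\sigma$ lives between the padded paths $\sigma_0\,f_*g_*p\,\sigma_k^{-1}$ and $\sigma_0\,f_*g_*q\,\sigma_k^{-1}$. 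These agree only at their endpoints, so the ``reconciliation'' you flag is not mere bookkeeping: with your choice of $t_{i+1}$ you would need a further homotopy from $t_1\cdots t_k$ to $f_*g_*p$. The paper avoids this entirely by taking the rung path to \emph{be} $\pi(\tau_i):=f_*(g_*e_{i+1})$, i.e.\ the image under $f_*$ of the chosen geodesic in $\Gamma$ for the edge $e_{i+1}$; then the ladder literally ends at $\sigma_0\,\pi(\tau)\,\sigma_k^{-1}$ and the transported filling begins there, with no extra cells. Your distance estimate $d_\Delta(fg(v_i),fg(v_{i+1}))\le\lambda^2+(\lambda+1)\epsilon$ is exactly the first term of $m$, but note that with the choice $t_{i+1}=\pi(\tau_i)$ one controls $|\pi(\tau_i)|$ rather than the geodesic distance; this is the accounting you anticipated.

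Your treatment of $\gamma\prec\delta$ is in fact more careful than the paper's, which simply says ``apply what we have proved with $\Gamma$ and $\Delta$ interchanged''. You correctly observe that swapping yields a bound on the Dehn function of $K_{m'}(\Gamma)$ for some $m'\ge n$, and your elementary comparison between the Dehn functions of $K_n$ and $K_N$ (for $n\le N$, using that each $K_N$-cell is filled by at most $\gamma(N)$ cells of $K_n$) is exactly what is needed to close that gap.
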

\begin{proof}
For each arc $\alpha$ from $a$ to $b$ in $\Gamma$, where $a$ and $b$ are vertices, choose and
fix a geodesic path $\pi(\alpha)$ in $\Delta$ from $f(a)$ to $f(b)$; note that
$d(f(a),f(b)) \leq \lambda + \epsilon$, so such a geodesic exists. The map
$\pi$ extends naturally to a map from $P(\Gamma)$ to $P(\Delta)$ which we
also denote by $\pi$. Let $F$ and $E$ be the sets of $2$-cells of
$K_n(\Gamma)$ and $K_m(\Delta)$ respectively. By definition of $m$
(considering the right hand term) for all $f \in F$ there exists
$e \in E$ such that $\topp{e} = \pi(\topp{f})$, $\bott{e} = \pi(\bott{f})$.
For each $f \in F$ choose and fix such an $e \in E$ and define $\pi(f) = e$.
For each atomic $2$-path $(p,f,q)$ of $K_n(\Gamma)$, define $\pi(p,f,q) = (\pi(p),\pi(f),\pi(q))$.
Clearly this is an atomic $2$-path of $K_m(\Delta)$. This now extends in an obvious way
to a
mapping $\pi$ from $2$-paths of $K_n(\Gamma)$ to $2$-paths of $K_m(\Delta)$.
In other words, $\pi:K_n(\Gamma) \rightarrow K_m(\Delta)$ is a morphism of
directed $2$-complexes (in the sense of \cite[Section~5]{Guba06}).

Now let $p,q \in P(\Gamma)$ with $p \parallel q$. By assumption there is a
$2$-path in $K_n(\Gamma)$ from $p$ to $q$. The image of this $2$-path under
$\pi$ is then a $2$-path in $K_m(\Delta)$ from $\pi(p)$ to $\pi(q)$, of the
same length.

Next suppose $r$ and $s$ are parallel
paths in $\Delta$. Suppose the vertices of these paths, in order, are
$r_0,r_1, \dots, r_c$ and $s_0,s_1, \dots, s_d$ respectively. Then
$r_0 = s_0$, $r_c=s_d$. For each $i$ let $\sigma_i$ be a path in $\Delta$
from $r_i$ to $fg(r_i)$ with $|\sigma_i| \leq \mu$ and let $\sigma_i^{-1}$
be a path in $\Delta$ from $fg(r_i)$ to $r_i$ with $|\sigma_i^{-1}| \leq \mu$. 
For $0 \leq i < c$, choose a geodesic $\tau_i$ in $\Gamma$ from $g(r_i)$ to
$g(r_{i+1})$. Similarly, for each $0 \leq j < d$, choose a geodesic $\zeta_j$
in $\Gamma$ from $g(s_j)$ to $g(s_{j+1})$.

Let $\tau = \tau_0 \dots \tau_{c-1}$
and $\zeta = \zeta_0 \dots \zeta_{d-1}$. Since $g$ is a
$(\lambda, \epsilon,\mu)$-quasi-isometry and we have 
$|\tau_i| = d(g(r_i), g(s_{i+1})) \leq \lambda + \epsilon$, and hence
$|\tau| \leq c (\lambda+\epsilon)$. Similarly,
$|\zeta| \leq d (\lambda+\epsilon)$.
Now $\tau$ and $\zeta$ are parallel paths of length in $\Gamma$ and hence
in $K_n(\Gamma)$. Since $K_n(\Gamma)$ is directed simply connected, this
means there is a $2$-path $\phi$ from $\tau$ to $\zeta$. And since $\delta$ is
the Dehn function of $K_n(\Gamma)$, we may choose $\phi$ of length at most
$\delta(|\tau| + |\zeta|) \leq \delta((c+d)(\lambda + \epsilon))$.

Now by the above observations, $\pi(\phi)$ is a $2$-path from $\pi(\tau)$
to $\pi(\zeta)$, of length at most $\delta((c+d)(\lambda+\epsilon))$.

Moreover, by definition of $m$ (considering the left hand term) there exists
$e_0 \in E$ with 
\[
\topp{e_0} = \sigma_0 \pi(\tau_0) \sigma_1^{-1},
\quad
\bott{e_0} = (r_0,r_1), 
\]
and for $1 \leq i \leq c-1$, there exist $e_i \in E$ such that
\[
\topp{e_i} = \pi(\tau_i) \circ \sigma_{i+1}^{-1}, 
\quad
\bott{e_i} = \sigma_i^{-1} \circ (r_i,r_{i+1}). 
\]
These combine, as illustrated in Figure~\ref{fig_2path}, 
to give an atomic
$2$-path of length $c$ from $r$ to $\sigma_0 \pi(\tau) \sigma_c^{-1}$.
An entirely similar argument shows that there is a $2$-path of length
$d$ from $\sigma_0 \pi(\zeta) \sigma_c^{-1}$ to $s$, and we have already seen that there is a $2$-path
of length at most $\delta((c+d)(\lambda+\epsilon))$ from $\pi(\tau)$ to
$\pi(\zeta)$, and hence there is a $2$-path of the same length from $\sigma_0 \pi(\tau) \sigma_c^{-1}$ to
$\sigma_0 \pi(\zeta) \sigma_c^{-1}$. Thus, there is a $2$-path of length at most
$$c+d + \delta((c+d)((\lambda + \epsilon))$$
where $c+d = |r|+|s|$. This shows that $K_m(\Delta)$ is directed simply connected
and $\delta \prec \gamma$ as required. Moreover, now we know that $K_m(\Delta)$
is directed simply connected, we may apply what we have proved with
$\Gamma$ and $\Delta$, to yield $\gamma \prec \delta$ and hence
$\gamma \sim \delta$.
\end{proof}

\begin{figure}
\begin{center}
\begin{tikzpicture}
[decoration={ 
markings,
mark=
at position 0.56 
with 
{ 
\arrow[scale=1.5]{stealth} 
} 
} 
] 
\tikzstyle{vertex}=[circle,draw=black, fill=white, inner sep = 0.4mm]

\matrix[row sep=5mm,column sep = 2cm]{

\node (v0) [vertex,label={180:{\tiny $fg(s_0)=fg(r_0)$}}] at (180:4cm) {};
\node (v1) [vertex,label={135:{}}] at (150:4cm) {};
\node (v2) [vertex,label={90:{}}] at (120:4cm) {};
\node (v3) [vertex,label={135:{}}] at (90:4cm) {};
\node (v4) [vertex,label={90:{}}] at (60:4cm) {};
\node (v5) [vertex,label={90:{}}] at (30:4cm) {};
\node (v6) [vertex,label={00:{\tiny $fg(r_c)=fg(s_d)$}}] at (0:4cm) {};
\draw [postaction={decorate}] (v0)--(v1);
\draw [postaction={decorate}] (v1)--(v2);
\draw [postaction={decorate}] (v2)--(v3);
\draw [postaction={decorate}] (v3)--(v4);
\draw [postaction={decorate}] (v4)--(v5);
\draw [postaction={decorate}] (v5)--(v6);
\node (w0) [vertex,label={00:{\tiny $r_0=s_0$}}] at (180:2cm) {};
\node (w1) [vertex,label={-30:{\tiny $r_1$}}] at (150:2cm) {};
\node (w2) [vertex,label={-60:{\tiny $r_2$}}] at (120:2cm) {};
\node (w3) [vertex,label={-90:{}}] at (90:2cm) {};
\node at (90:1.5cm) {\tiny $\ldots$};
\node at (-90:1.5cm) {\tiny $\ldots$};
\node at (90:1.2cm) {\tiny $r$};
\node at (-90:1.2cm) {\tiny $s$};
\node at (180-15:4.3cm) {\tiny $\pi(\tau_0)$};
\node at (180-15-30:4.3cm) {\tiny $\pi(\tau_1)$};
\node at (15:4.5cm) {\tiny $\pi(\tau_{c-1})$};
\node at (90:4.8cm) {\tiny $\pi(\tau)$};
\node at (-90:4.8cm) {\tiny $\pi(\zeta)$};
\node (w4) [vertex,label={90:{}}] at (60:2cm) {};
\node (w5) [vertex,label={90:{}}] at (30:2cm) {};
\node (w6) [vertex,label={180:{\tiny $r_c=s_d$}}] at (0:2cm) {};
\draw [postaction={decorate}] (w0)--(w1);
\draw [postaction={decorate}] (w1)--(w2);
\draw [postaction={decorate}] (w2)--(w3);
\draw [postaction={decorate}] (w3)--(w4);
\draw [postaction={decorate}] (w4)--(w5);
\draw [postaction={decorate}] (w5)--(w6);
\draw [postaction={decorate}] (w0)--(v0);
\draw [postaction={decorate}] (v6)--(w6);
%
%
%
%
%
%
%
\node (v0') [vertex,label={90:{}}] at (-180:4cm) {};
\node (v1') [vertex,label={135:{}}] at (-150:4cm) {};
\node (v2') [vertex,label={90:{}}] at (-120:4cm) {};
\node (v3') [vertex,label={135:{}}] at (-90:4cm) {};
\node (v4') [vertex,label={90:{}}] at (-60:4cm) {};
\node (v5') [vertex,label={90:{}}] at (-30:4cm) {};
\node (v6') [vertex,label={90:{}}] at (-0:4cm) {};
\draw [postaction={decorate}] (v0')--(v1');
\draw [postaction={decorate}] (v1')--(v2');
\draw [postaction={decorate}] (v2')--(v3');
\draw [postaction={decorate}] (v3')--(v4');
\draw [postaction={decorate}] (v4')--(v5');
\draw [postaction={decorate}] (v5')--(v6');
\node (w0') [vertex,label={90:{}}] at (-180:2cm) {};
\node (w1') [vertex,label={30:{\tiny $s_1$}}] at (-150:2cm) {};
\node (w2') [vertex,label={60:{\tiny $s_2$}}] at (-120:2cm) {};
\node (w3') [vertex,label={135:{}}] at (-90:2cm) {};
\node (w4') [vertex,label={90:{}}] at (-60:2cm) {};
\node (w5') [vertex,label={90:{}}] at (-30:2cm) {};
\node (w6') [vertex,label={90:{}}] at (-0:2cm) {};
\draw [postaction={decorate}] (w0')--(w1');
\draw [postaction={decorate}] (w1')--(w2');
\draw [postaction={decorate}] (w2')--(w3');
\draw [postaction={decorate}] (w3')--(w4');
\draw [postaction={decorate}] (w4')--(w5');
\draw [postaction={decorate}] (w5')--(w6');
%
%
%
%
%
\draw [postaction={decorate}] (v1)--(w1);
\node at (144:3cm) {\tiny $\sigma_1^{-1}$};
\node at (176:3cm) {\tiny $\sigma_0$};
\node at (83:3cm) {\tiny $\ldots$};
%
%
\draw [postaction={decorate}] (v2)--(w2);
\node at (113:3cm) {\tiny $\sigma_2^{-1}$};
\node at (06:3cm) {\tiny $\sigma_c^{-1}$};
%
\draw [postaction={decorate}] (v3)--(w3);
%
\draw [postaction={decorate}] (v4)--(w4);
%
\draw [postaction={decorate}] (v5)--(w5);
%
%
%
%
%
%
%
\draw [postaction={decorate}] (v1')--(w1');
%
\draw [postaction={decorate}] (v2')--(w2');
%
\draw [postaction={decorate}] (v3')--(w3');
%
\draw [postaction={decorate}] (v4')--(w4');
%
\draw [postaction={decorate}] (v5')--(w5');
%
%
%
%
\\
\\
};

\end{tikzpicture}
\end{center}
\caption{
An illustration of the proof of Lemma~\ref{lem_digraphs}.
There is a $2$-path from $r$ to $\sigma_0 \pi(\tau) \sigma_c^{-1}$ given by first replacing $(r_0,r_1)$ by $\sigma_0 \pi(\tau_0) \sigma_1^{-1}$, then $\sigma_1^{-1} (r_1, r_2)$ by $\pi(\tau_1) \sigma_2^{-1}$, and so on. In a similar way one constructs a $2$-path from $s$ to $\sigma_0 \pi(\zeta) \sigma_c^{-1}$. 
}
\label{fig_2path}
\end{figure}
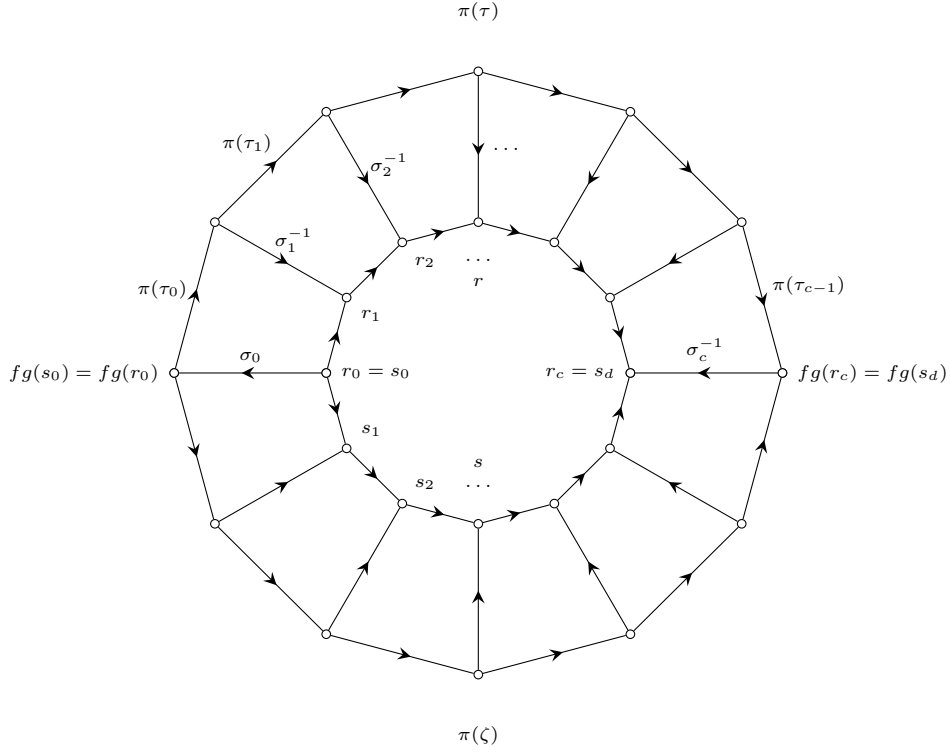

We are now ready to prove our main theorems.

\begin{theorema}
Let $M$ and $N$ be left cancellative, finitely generated monoids which are
quasi-isometric. Then $M$ is finitely presentable if and only if $N$ is
finitely presentable.
\end{theorema}
\begin{proof}
It follows from Lemmas~\ref{lem_quasiinverses} and \ref{lem_digraphs}
that the property of being quasi-simply-connected is a quasi-isometry invariant of
directed graphs. The result then follows by applying Theorem~\ref{thm_LeftCanFPGeom}. 
\end{proof}

\begin{theoremb}
Let $M$ and $N$ be left cancellative, finitely presentable monoids which
are quasi-isometric. Then $M$ has solvable word problem if and only if $N$
has solvable word problem.
\end{theoremb}
\begin{proof}
Let $\langle A \mid R \rangle$ and $\langle B \mid S \rangle$ be finite
presentations for $M$ and $N$ respectively, and let $\delta$ and $\gamma$
be the Dehn functions of these presentations respectively. Then by
Theorem~\ref{thm_LeftCanFPGeom}, there is an $n$ such that
$K_n(\Gamma_r(A,R))$ and $K_n(\Gamma_r(B,S))$ are directed simply connected
and moreover, if we let $\delta'$ and $\gamma'$ be the Dehn functions of
these graphs, then $\delta \sim \delta'$ and $\gamma \sim \gamma'$. Now
by Lemmas~\ref{lem_quasiinverses} and \ref{lem_digraphs} we have
$$\gamma \sim \gamma' \prec \delta' \sim \delta.$$
Since $M$ has solvable
word problem, $\delta$ is bounded above by a recursive function. Hence
so is $\gamma$, and so $N$ has solvable word problem.
\end{proof}

\section{The Non-Cancellative Case}\label{sec_examples}

In this section we present some examples showing that the theory developed above is very far from being extendable to arbitrary finitely generated monoids. 
We begin by giving examples which show that, 
for finitely generated monoids in general, being quasi-simply-connected
is neither a necessary nor sufficient condition for the existence of a
finite presentation.

First, let us see how to construct an example of a finitely presented monoid
whose Cayley graph is not quasi-simply connected. This serves as an
instructive example of how intuitions from group theory can fail in a more
general setting. It would seem intuitively clear that if a monoid is finitely
presented then one should be able to use the relations to build 2-paths
between arbitrary pairs of parallel paths in the Cayley graph. Indeed this
is true for 2-paths whose origin is the identity element of the monoid, but
in general there are many more 2-paths than that in the Cayley graph, and
without left cancellativity the idea of filling in parallel paths with
relations loses sense since one has to ``trace back" to the identity of
the monoid in order to find two words that are equal before one can start
applying relations from the presentation. 

Before presenting the example we shall need to introduce a some basic
notions from the structure theory of semigroups.
Recall that Green's relations $\mathcal{L}$ and $\mathcal{R}$ are
defined on any semigroup $S$ by $a \mathcal{L} b$ [respectively,
$a \mathcal{R} b$] if either $a = b$ or there exist elements
$c, d \in S$ with $a = cb$ and $b = da$ [respectively, $a = bc$ and $b = ad$].
Green's relation $\mathcal{H}$ is defined by $a \mathcal{H} b$
if and only if $a \mathcal{L} b$ and $a \mathcal{R} b$. All three relations
are equivalence relations. Notice it is immediate from the definitions that
the $\mathcal{R}$-classes of a finitely generated monoid (that is, equivalence
classes of the relation $\mathcal{R}$) are exactly the strongly connected
components of the Cayley graph.

It is well known that the $\mathcal{H}$-class of any idempotent is a maximal
subgroup. The notion of \textit{Sch\"utzenberger group} gives a useful way to
associate a group to an $\mathcal{H}$-class \textbf{not} containing an
idempotent. Let $H$ be an $\mathcal{H}$-class of $S$, and let
$\mathrm{Stab}(H) = \{s \in S : sH = H \}$ denote the (left) stabilizer
of $H$ under the action of $S$. We define an equivalence $\sigma = \sigma(H)$
on the stabilizer by $(x,y) \in \sigma$ if and only if $xh = xy$ for all
$h \in H$. It is straightforward to verify that $\sigma$ is a congruence,
and that $\mathcal{G}(H) = \mathrm{Stab}(H)/\sigma$ is a group, called
the left Sch\"utzenberger group of $H$. One can also define the right
Sch\"utzenberger group of $H$ in the natural way, and it turns out that
the left and right Sch\"utzenberger groups are isomorphic to one another.
For information about the basic properties of Sch\"utzenberger groups we
refer the reader to 
\cite[Section~2.3]{Lallement79}. 

Let $R$ be an $\mathcal{R}$-class of $H$. The (right) Sch\"utzenberger graph
$\Gamma(R, A)$ of $R$, with respect to $A$, is the strongly connected
component of $h \in H$ in $\Gamma(M,A)$. It is easily seen to consist
of those vertices which are elements of $R$, together with edges
connecting them, and so can be obtained by beginning with a directed
graph with vertex set $R$ and a directed labelled edge from $x$ to $y$ labelled by $a \in A$ if and only if $xa = y$. From its construction it is clear that for any generating set $A$ of $M$, $\Gamma(R,A)$ may be viewed a connected geodesic semimetric space.

The following observation results from the fact that the property of
being quasi-simply-connected is inherited by the strongly connected components of a directed graph (by which we mean the subdigraphs induced on the equivalence classes of vertices where two vertices $u$ an $v$ are in the same class if there is a directed path from $u$ to $v$ and a directed path back from $v$ to $u$). 

\begin{proposition}
\label{prop_strongcomp}
Let $S$ be a monoid generated by a finite set $A$ and let $R$ be an
$\gr$-class of $S$ with Sch\"utzenberger graph $\Delta(R)$. If
$\Gamma_r(S,A)$ is $n$-quasi-simply-connected then $\Delta(R)$ is
$n$-quasi-simply-connected. 
\end{proposition}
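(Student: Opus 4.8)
\emph{Proof proposal.} The plan is to reduce to a purely combinatorial statement about directed graphs, using the remark recorded just before the proposition that in a finitely generated monoid the $\gr$-classes coincide with the strongly connected components of the Cayley graph. Thus $\Delta(R)$ is exactly the subdigraph of $\Gamma = \Gamma_r(S,A)$ induced on one strongly connected component $C$, and it suffices to show: for any directed graph $\Gamma$ with strongly connected component $C$, if $K_n(\Gamma)$ is directed simply connected then so is $K_n(C)$. (No cancellativity is used, and $n$ is unchanged.) The single geometric fact needed is that every directed path $p$ of $\Gamma$ whose endpoints $\iota p$ and $\tau p$ both lie in $C$ lies entirely inside $C$: if $w$ is a vertex of $p$, then the prefix of $p$ up to $w$ is a directed path from $\iota p$ to $w$ and its suffix is a directed path from $w$ to $\tau p$; since $\iota p$ and $\tau p$ lie in a common strongly connected component there is also a directed path from $\tau p$ to $\iota p$, and composing shows $w$ and $\iota p$ each reach the other, so $w \in C$. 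As $C$ is an induced subdigraph, $p$ and all its subpaths are then paths in $C$.

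With this in hand, let $p \parallel q$ be parallel paths in $\Delta(R) = C$. They are a fortiori parallel in $\Gamma$, so by hypothesis there is a $2$-path $\delta$ in $K_n(\Gamma)$ from $p$ to $q$, say with atomic constituents $\delta_1, \dots, \delta_k$ where $\delta_i = (p_i, f_i, q_i)$. Because $\topp{f} \parallel \bott{f}$ for every $2$-cell $f$ and because consecutive tops and bottoms agree along $\delta$, every $1$-path occurring in $\delta$ --- in particular every $\topp{\delta_i}$ and $\bott{\delta_i}$, and hence every $p_i$ and $q_i$ --- is parallel to $p$. Each therefore has both endpoints in $C$ and so, by the observation above, lies in $C$. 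Consequently, for each $i$ the paths $\topp{f_i}$ and $\bott{f_i}$, being subpaths of $\topp{\delta_i}$ and $\bott{\delta_i}$, lie in $C$; they are parallel and satisfy $|\topp{f_i}| + |\bott{f_i}| \le n$, so $f_i$, regarded as the pair $(\topp{f_i}, \bott{f_i})$, is also a $2$-cell of $K_n(C)$. Reinterpreting each $\delta_i = (p_i, f_i, q_i)$ inside $C$ therefore produces a $2$-path from $p$ to $q$ in $K_n(\Delta(R))$, and the gluing conditions persist since they are equalities of paths that already hold in $\Gamma$.

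Since $p$ and $q$ were arbitrary parallel paths of $\Delta(R)$, this shows $K_n(\Delta(R))$ is directed simply connected, i.e.\ $\Delta(R)$ is $n$-quasi-simply-connected, as required. The proof is essentially bookkeeping once the remark about strongly connected components is in place; the only step demanding genuine care is verifying that \emph{every} intermediate $1$-path and $2$-cell of the chosen $2$-path remains inside $C$, which is precisely what the parallelism of $\topp{f}$ and $\bott{f}$, propagated along $\delta$, delivers.
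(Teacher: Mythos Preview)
Your proof is correct and follows essentially the same approach as the paper: both use that $\Delta(R)$ is a strongly connected component of $\Gamma_r(S,A)$, pull a $2$-path in $K_n(\Gamma)$ between given parallel paths of $\Delta(R)$, and then argue that every intermediate $1$-path and $2$-cell of that $2$-path already lies in $\Delta(R)$. Your write-up is more careful in spelling out why a directed path with both endpoints in a strongly connected component must lie entirely inside it, which is exactly the point the paper's proof compresses into a single clause.
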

\begin{proof}
By definition $\Delta(R)$ is a strongly connected component of the
digraph $\Gamma_r(S,A)$. Suppose $p$ and $q$ are parallel paths in
$\Delta(R) \subseteq \Gamma_r(S,A)$. Since $\Gamma_r(S,A)$ is
$n$-quasi-simply connected there is a 2-path $\delta$ in
$K_n(\Gamma_r(S,A))$ from $p$ to $q$. Since $\Delta(R)$ is a strongly
connected component of $\Gamma_r(S,A)$, all $1$-paths featuring in $\delta$
lie inside $\Delta(R)$ and hence in $K_n(\Delta(R))$. But now by
the definition of $K_n(\Delta(R))$, all faces featuring in $\delta$ lie
in $K_n(\Delta)$, and so $\delta$ is also 
a $2$-path in 
$K_n(\Delta(R))$ from $p$ to $q$.
\end{proof}

\begin{corollary}\label{corol_passing}
Let $S$ be a monoid generated by a finite set $A$ and let $H$ be an $\gh$-class of $S$. If $S$ is quasi-simply connected and the $\gr$-class $R$ of $H$ contains only finitely many $\gh$-classes, then the Sch\"utzenberger group $\mathcal{G}(H)$ is finitely presented. 
\end{corollary}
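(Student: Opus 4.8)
The plan is to show that the Sch\"utzenberger graph $\Delta(R)$ is quasi-isometric to the right Cayley graph of the group $\mathcal{G}(H)$ with respect to a suitable finite generating set, and then to feed this into Proposition~\ref{prop_strongcomp}, into the quasi-isometry invariance of quasi-simple-connectedness of directed graphs (which is what the proof of Theorem~A establishes, via Lemmas~\ref{lem_quasiinverses} and~\ref{lem_digraphs}), and into Theorem~\ref{thm_LeftCanFPGeom} applied to $\mathcal{G}(H)$, which being a group is in particular a finitely generated left cancellative monoid.

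First I would assemble the facts needed from the structure theory of Sch\"utzenberger groups (see \cite[Section~2.3]{Lallement79}). Write $G = \mathcal{G}(H)$ and fix a basepoint $h_0 \in H$. Every element of $R$ is of the form $h_0 w$ with $w \in S^1$, so a representative $s$ of an element $g \in G$ acts unambiguously on $R$ by $x \mapsto sx$; this gives a well-defined action of $G$ on the set $R$, and since left and right multiplications commute it is an action by label-preserving automorphisms of the graph $\Delta(R)$. Moreover this action is \emph{free}, and its \emph{orbits are exactly the $\mathcal{H}$-classes contained in $R$} (in particular $G$ acts simply transitively on $H$); freeness gives an injection $G \to R$, $g \mapsto g h_0$, with image $H$.

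Next I would transport the geometry of $\Delta(R)$ onto $G$. Since $R$ has only finitely many $\mathcal{H}$-classes $H_1, \dots, H_k$ (say $H = H_1$) and $\Delta(R)$ is strongly connected, for each $i$ we may fix a vertex $x_i \in H_i$ together with directed paths in $\Delta(R)$ from $x_i$ into $H$ and from $H$ into $x_i$; translating these paths by the $G$-action, which is transitive on $H_i$, bounds the distance from every vertex of $R$ to $H$, and from $H$ to every vertex of $R$, by some constant $\mu$. Thus $H$ is $\mu$-quasi-dense in $\Delta(R)$, so the inclusion into $\Delta(R)$ of $H$ equipped with the induced semimetric is a quasi-isometry. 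Pulling this semimetric back to $G$ along $g \mapsto g h_0$ gives a left-invariant semimetric on $G$ with norm $\rho(g) = d_{\Delta(R)}(h_0, g h_0)$. Now set $X = \{\, g \in G : \rho(g) \le 2\mu + 1 \,\}$. Since the out-degrees in $\Delta(R)$ are at most $|A|$ and $g \mapsto g h_0$ is injective, $X$ is finite; and by following a geodesic in $\Delta(R)$ from $h_0$ to $g h_0$ and hopping from each of its vertices to a nearby vertex of $H$, one checks that $X$ generates $G$ and that $|g|_X \le \rho(g) \le (2\mu+1)\,|g|_X$ for all $g \in G$. Hence the identity map between $(G,|\cdot|_X)$ and $(G,\rho)$ is a quasi-isometry, and so the Cayley graph $\Gamma_r(G,X)$ is quasi-isometric to $\Delta(R)$.

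To conclude: by Proposition~\ref{prop_strongcomp}, $\Delta(R)$ is $n$-quasi-simply-connected for some $n$; since quasi-simple-connectedness is a quasi-isometry invariant of directed graphs, $\Gamma_r(G,X)$ is quasi-simply-connected; and since $G$ is a group, hence a finitely generated left cancellative monoid, Theorem~\ref{thm_LeftCanFPGeom} gives that $G = \mathcal{G}(H)$ is finitely presented. I expect the main obstacle to be the first step, namely the careful verification of the properties of the $\mathcal{G}(H)$-action on $\Delta(R)$ --- above all that it is free with orbits precisely the $\mathcal{H}$-classes --- together with the small amount of bookkeeping (bounded out-degree, finiteness of $X$) required to convert the hypothesis ``finitely many $\mathcal{H}$-classes'' into a genuine finite generating set; once this is in place the quasi-isometry is a routine \v{S}varc--Milnor-type argument, and indeed one could alternatively appeal directly to the semimetric \v{S}varc--Milnor lemma of \cite{K_svarc}.
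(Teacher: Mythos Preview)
Your proposal is correct and follows essentially the same route as the paper: use Proposition~\ref{prop_strongcomp} to pass quasi-simple-connectedness to $\Delta(R)$, establish that $\mathcal{G}(H)$ is quasi-isometric to $\Delta(R)$, transfer quasi-simple-connectedness via Lemmas~\ref{lem_quasiinverses} and~\ref{lem_digraphs}, and conclude with Theorem~\ref{thm_LeftCanFPGeom}. The only difference is that you carry out the \v{S}varc--Milnor step by hand (free isometric action, quasi-density of $H$ from the finiteness of the $\mathcal{H}$-classes, construction of the finite generating set $X$), whereas the paper simply invokes the \v{S}varc--Milnor lemma for groups acting on semimetric spaces from \cite[Theorem~1 and Section~5]{K_semimetric}; you yourself note this alternative at the end.
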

\begin{proof}
The Sch\"utzenberger group $\mathcal{G}(H)$ acts naturally on the
Sch\"utzenberger graph $\Delta(R)$ in such a way that applying
the \v{S}varc-Milnor lemma for groups acting on semimetric spaces
\cite[Theorem~1]{K_semimetric} it follows that $\mathcal{G}(H)$ is a
finitely generated group which is quasi-isometric to $\Delta(R)$; see \cite[Section~5]{K_semimetric}. By Proposition~\ref{prop_strongcomp}, $\Delta(R)$ is quasi-simply connected. Since they are quasi-isometric, the group $\mathcal{G}(H)$ is quasi-simply connected by Lemma~\ref{lem_digraphs} which, by Theorem~\ref{thm_LeftCanFPGeom}, implies that the group $\mathcal{G}(H)$ is finitely presented. 
\end{proof}

\begin{proposition}
There exists a finitely presented monoid $S$ whose Cayley graph is not quasi-simply connected. 
\end{proposition}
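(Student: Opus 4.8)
The plan is to build $S$ as a monoid which contains, as a Sch\"utzenberger group of some $\mathcal{H}$-class, a finitely generated group that is \emph{not} finitely presented, while arranging the rest of $S$ so that $S$ itself is finitely presented. Corollary~\ref{corol_passing} tells us that if $S$ were quasi-simply connected and the relevant $\mathcal{R}$-class $R$ contained only finitely many $\mathcal{H}$-classes, then $\mathcal{G}(H)$ would be finitely presented; so if we engineer $\mathcal{G}(H)$ to be finitely generated but not finitely presented, and $R$ to have only finitely many $\mathcal{H}$-classes, then $S$ cannot be quasi-simply connected. The classical source of such a group is a suitable wreath product, for instance $\mathbb{Z} \wr \mathbb{Z}$, which is finitely generated but not finitely presented; I would take $G$ to be such a group with a fixed finite generating set.

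The concrete construction I would use is the following: take a finite presentation-friendly ambient semigroup whose structure forces one ``column'' to look like $G$. A clean way is to let $S$ be generated by the generators of $G$ together with one extra idempotent-like generator (or a zero and an identity) and impose, on top of the defining relations one would want for $G$, enough extra relations to collapse all but finitely many elements. The key trick is that although $G = \mathbb{Z}\wr\mathbb{Z}$ is not finitely presented \emph{as a group}, the monoid $S$ we build around it can still be finitely presented because the ``missing'' group relations are never needed to derive equalities of words in $S$: unlike in a group, in a monoid one cannot cancel to reduce an arbitrary pair of equal words to a relator, so the word problem of $S$ can be controlled by a finite relation set even when the Sch\"utzenberger group is infinitely presented. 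I would give the presentation explicitly, verify that it indeed defines a monoid $S$ with an $\mathcal{R}$-class $R$ having finitely many $\mathcal{H}$-classes and with $\mathcal{G}(H) \cong G$ for an $\mathcal{H}$-class $H$ in $R$, and then invoke Corollary~\ref{corol_passing} in contrapositive form to conclude that $\Gamma_r(S,A)$ is not quasi-simply connected.

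Concretely, the steps in order: (1) fix $G$ finitely generated but not finitely presented, with finite generating set $X$; (2) write down a finite monoid presentation $\langle A \mid \mathcal{P} \rangle$ — with $A$ containing $X$ and a bounded number of auxiliary generators — whose associated monoid $S$ has the property that the submonoid/ideal structure produces an $\mathcal{R}$-class $R$ on which $G$ acts simply transitively on $\mathcal{H}$-classes, so that $R$ has only finitely many (in fact one) $\mathcal{H}$-classes and $\mathcal{G}(H) \cong G$; (3) check finite presentability of $S$ directly from the finite presentation $\mathcal{P}$ (this is immediate once the presentation is written down, since it is finite by construction); (4) apply Corollary~\ref{corol_passing}: if $\Gamma_r(S,A)$ were quasi-simply connected, $\mathcal{G}(H) = G$ would be finitely presented, a contradiction; hence $\Gamma_r(S,A)$ is not quasi-simply connected.

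The main obstacle is step (2): one must exhibit a genuinely \emph{finite} monoid presentation whose Green's structure realises a prescribed infinitely-presented group as a Sch\"utzenberger group with the $\mathcal{R}$-class restriction in force. The subtlety is that a naive attempt to ``glue a group into a monoid'' by taking a group presentation and adjoining a zero gives back a monoid with essentially the same presentation complexity, so it would not be finitely presented; the resolution is to add structure (extra generators and relations) that makes most products fall into a small absorbing part (e.g.\ a zero or a finite ``spine''), so that the only nontrivial word identities that survive are ones that can be checked with finitely many relations, \emph{even though} the Sch\"utzenberger group they encode is infinitely presented. Verifying carefully that the Sch\"utzenberger group is exactly $G$ (and not some quotient or extension), and that the $\mathcal{R}$-class has finitely many $\mathcal{H}$-classes, is where the real work lies; the rest is routine once the presentation is on the page.
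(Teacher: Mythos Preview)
Your high-level strategy is exactly the paper's: exhibit a finitely presented monoid $S$ containing an $\mathcal{H}$-class $H$ whose $\mathcal{R}$-class has only finitely many $\mathcal{H}$-classes and whose Sch\"utzenberger group $\mathcal{G}(H)$ is not finitely presented, then apply the contrapositive of Corollary~\ref{corol_passing}. So the architecture is right.

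The gap is that you have not executed step~(2), and you correctly identify it as ``where the real work lies''. Your sketch of how to carry it out --- embedding $G=\mathbb{Z}\wr\mathbb{Z}$ and padding with an absorbing zero or a finite spine so that ``most products fall into a small absorbing part'' --- is not a mechanism that is known to work, and indeed the actual constructions in the literature do not operate this way. The paper does not build such a monoid from scratch either: it quotes an explicit finite presentation from Ru\v{s}kuc~\cite{Ruskuc00}, over the alphabet $\{a_1,\dots,a_4,a_1',\dots,a_4',b,c,d\}$, and cites Ru\v{s}kuc's Reidemeister--Schreier computation showing that the $\mathcal{H}$-class of $cbd$ is alone in its $\mathcal{R}$-class and has Sch\"utzenberger group $\langle a_1,a_2,a_3,a_4 \mid a_1^{2^i}a_2^{2^i}=a_3^{2^i}a_4^{2^i}\ (i\ge 0)\rangle$, an amalgamated free product that is not finitely presented. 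The point is that the infinite family of group relations is \emph{generated} by the interaction of the finite monoid relations $a_jb=ba_j^2$ (doubling) and $cb^2=cb$ (absorption of $b$'s after $c$), not by collapsing products into a zero. Producing such an example is genuinely nontrivial --- it was a main result of~\cite{Ruskuc00} --- so to complete your proof you should either cite that construction or supply a comparable explicit presentation and verify its Green's structure; the vague recipe you give is not enough.
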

\begin{proof}
Let $A$ be the alphabet 
\[
\{
a_1, a_2, a_3, a_4, a_1', a_2', a_3', a_4', b, c, d
\}
\]
and consider the presentation
\[
\langle A  \ |  \ a_j a_j' = a_j' a_j = \epsilon, \ a_1 a_2 = a_3 a_4, \ a_j b = b a_j^2, \ cb^2 = cb, \ a_j d = d a_j, 
\]
\[
cbda_j = a_jcbd  \ (j = 1,2,3,4) \rangle. 
\]
Let $S$ be the monoid defined by this presentation, and let $H$ be the $\gh$-class of $h \equiv cbd$. 
In \cite{Ruskuc00} using Reidemeister-Schreier rewriting methods it is shown that the Sch\"utzenberger group $\mathcal{G}(H)$ is defined by the following group presentation
\[
\langle
a_1, a_2, a_3, a_4 \ | \ a_1^{2^i} a_2^{2^i} = a_3^{2^i} a_4^{2^i} \ (i=0,1,2,\ldots) 
\rangle
\]
which is not finitely presented since it is an amalgamated product of
two free groups of rank two with a free group of infinite rank amalgamated.
It is also shown in \cite{Ruskuc00} that the $\gh$-class $H$ is the unique
$\gh$-class in its $\gr$-class. Therefore, by the contrapositive to
Corollary~\ref{corol_passing}, the finitely presented monoid $S$ is not
quasi-simply connected.
\end{proof}

Next, we exhibit an uncountable family of (non-left-cancellative)
finitely generated monoids which are pairwise non-isomorphic, but nevertheless
all share the same unlabelled Cayley graph. 
This Cayley graph will turn out to be isomorphic to a directed rooted tree with all vertices having out degree $4$ or $5$. 
We show that this family contains
examples of monoids with solvable word problem, 
and monoids with word problem
neither recursively enumerable nor co-recursively enumerable. This shows
that the  solvability of the word problem for general monoids is a
fundamentally ``non-geometric'' property which cannot be seen in a Cayley
graph.

We take the set $\mathbb{N}$ of natural numbers including $0$.
For each non-empty proper subset $X$ of $\mathbb{N}$, we define a
finitely generated monoid $M(X)$ by the following infinite presentation:
\[
\langle
a,b,c,d,e \ | \ 
a b^i c = a b^i d \ (i \in X), \quad a b^j c = a b^j e \ (j \notin X) 
\rangle.
\] 
We remark that since the monoids $M(X)$ are given by homogeneous presentations
with finitely many generators, they are residually finite. Indeed, any finite
set of elements can be seperated by a Rees quotient factoring out the ideal
consisting of all elements whose representatives have length $n$ or more,
for some sufficiently large $n$.

It is easy to show from the definition that the word problems for monoids
in this class can belong to a broad range of computability and complexity
classes:
\begin{proposition}\label{prop_complexity}
The word problem for $M(X)$ is linear-time Turing equivalent to the
membership problem for $X$ in unary coding. In particular, the word
problem for $M(X)$ is decidable if and only if $X$ is a recursive
subset of $\mathbb{N}$.
\end{proposition}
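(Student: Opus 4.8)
The plan is to establish both directions of the Turing equivalence by giving an explicit normal form for elements of $M(X)$, from which the word problem reduces to a small finite amount of checking plus, in the critical case, a single membership query for $X$. First I would analyse the presentation: the only defining relations have left-hand side $ab^ic$ and right-hand sides $ab^id$ or $ab^ie$ (according to whether $i\in X$), and every relator is length-preserving and in fact only identifies words of the form $ab^ic$, $ab^id$, $ab^ie$ with one another. So the rewriting system obtained by orienting each relation $ab^id\to ab^ic$ (when $i\in X$) and $ab^ie\to ab^ic$ (when $i\notin X$) is length-non-increasing, and one checks it is confluent (there are no overlaps between distinct relators, since the left-hand sides $ab^id$, $ab^ie$ are all distinct and none is a factor of another). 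Thus every word has a unique normal form, obtained by repeatedly replacing any factor $ab^id$ with $ab^ic$ if $i\in X$ and any factor $ab^ie$ with $ab^ic$ if $i\notin X$, and leaving all other factors untouched. Two words are equal in $M(X)$ iff they have the same normal form.

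Given this normal form, I would describe the reduction from the word problem to membership in $X$. Given words $w_1,w_2$, compute their normal forms and compare. Each rewriting step requires, upon locating a factor of the shape $ab^id$ or $ab^ie$ inside the current word, a query ``is $i\in X$?'' (with $i$ presented in unary as the block of $b$'s). Since the total length does not increase and each step shortens nothing but shifts at most a bounded amount of structure, the number of rewriting steps, and the total work of scanning, is linear in $|w_1|+|w_2|$; and the unary code of each queried $i$ is a contiguous substring of the input, so each query has size bounded by the input length and there are at most linearly many queries. This gives a linear-time Turing reduction from the word problem for $M(X)$ to the membership problem for $X$ in unary.

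Conversely, to reduce membership in $X$ (unary) to the word problem for $M(X)$, note that the relation $ab^ic=ab^id$ holds in $M(X)$ precisely when $i\in X$: indeed $ab^ic$ and $ab^id$ both have the normal form $ab^ic$ when $i\in X$, whereas when $i\notin X$ the normal form of $ab^ic$ is $ab^ic$ and that of $ab^id$ is $ab^id$ itself (no rule applies to it), so they are distinct. Hence, given $i$ in unary, forming the pair of words $ab^ic$ and $ab^id$ is a linear-time transformation, and $i\in X$ iff these two words are equal in $M(X)$. This establishes the reverse linear-time Turing reduction, completing the equivalence; the final sentence of the proposition then follows since a set is recursive iff its unary membership problem is decidable, and linear-time Turing equivalence preserves decidability.

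The main obstacle I anticipate is verifying confluence of the rewriting system cleanly enough to justify the uniqueness of normal forms: one must check that applying a rule at one factor $ab^ic\!\leftrightarrow ab^id/ab^ie$ never creates or destroys another such factor elsewhere in the word in a way that depends on the order of rewriting. Because the left-hand sides $ab^id$ and $ab^ie$ each begin with $a$ and end with $d$ or $e$ (letters that cannot be the interior of another left-hand side) and contain no further $a$, $c$, $d$, or $e$, distinct rewritable factors in a word are in fact disjoint and non-interfering, so confluence is immediate once this combinatorial observation is made precise; but writing that observation carefully is the one genuinely non-routine point. Everything else—the linearity bookkeeping for the reductions and the final translation into recursiveness—is routine.
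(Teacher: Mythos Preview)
Your argument is correct and follows the same strategy as the paper: exhibit a complete rewriting system for $M(X)$, use normal forms to solve the word problem modulo an oracle for $X$, and conversely test $i\in X$ by asking whether $ab^ic=ab^id$ in $M(X)$.

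The one noteworthy difference is the orientation of the rules. You rewrite $ab^id\to ab^ic$ (for $i\in X$) and $ab^ie\to ab^ic$ (for $i\notin X$), whereas the paper rewrites the other way, $ab^ic\to ab^id$ (for $i\in X$) and $ab^jc\to ab^je$ (for $j\notin X$). Both systems are confluent and terminating; in your case termination holds because each step strictly decreases the total number of occurrences of $d$ and $e$, a point you should make explicit since ``length-non-increasing'' alone does not suffice. The paper's orientation has the incidental advantage that the set of normal forms --- all words containing no factor of the form $ab^ic$ --- is visibly independent of $X$. That fact is exactly what drives the proof of the next proposition (that all the $M(X)$ share the same unlabelled Cayley graph), so if you are writing up the section as a whole you will find the paper's orientation more convenient; for the present proposition either choice works equally well.
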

\begin{proof}
Given a solution to the word problem for $M(X)$, one may decide whether
$n \in \mathbb{N}$ by simply checking whether $a b^n c = a b^n d$ in $M(X)$.

Conversely, it is easy to see that the defining presentation for $M(X)$
yields an (infinite) terminating, convergent writing system:
$$a b^i c \to a b^i d \textrm{ for } i \in X, \quad a b^j c \to a b^j e \textrm{ for } j \notin X.$$
Clearly, it is an easy matter to check if a given word is left-hand-side of a
rule. Given an algorithm for membership of $X$, which can check of \textit{which}
rule a given word is the left-hand-side.
Thus, we can compute a normal
form for a word $u$ by iteratively checking its (finitely many) factors to
see if any is the left-hand-side of a rule, and if so applying the rule.
In fact, the complete lack of overlap between left-hand-sides and
right-hand-sides of rules means that this procedure can be performed by a single
pass from left to right across $u$, and the sum of all the values for
which membership of $X$ must be checked will not exceed the length of $u$. 
\end{proof}

\begin{proposition}
The word problem for $M(X)$ is recursively enumerable if and only if
it is co-recursively enumerable.
\end{proposition}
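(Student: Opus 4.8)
The plan is to show that all three of the following are equivalent: the word problem for $M(X)$ is recursively enumerable; it is co-recursively enumerable; and $X$ is a recursive subset of $\mathbb{N}$. Since Proposition~\ref{prop_complexity} already gives the equivalence of the last condition with decidability of the word problem, and a decidable problem is both recursively enumerable and co-recursively enumerable, it suffices to prove that each of the first two conditions implies that $X$ is recursive.

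First I would recall the convergent rewriting system used in the proof of Proposition~\ref{prop_complexity}: the rules are $a b^i c \to a b^i d$ for $i \in X$ and $a b^j c \to a b^j e$ for $j \notin X$, there are no overlaps between left-hand sides, so every word has a unique normal form, namely the unique word in its $\equiv$-class containing no factor of the form $a b^n c$; in particular two words are equal in $M(X)$ if and only if they have the same normal form. Computing the normal forms of the relevant words then yields, for every $n \in \mathbb{N}$,
\[
n \in X \iff a b^n c = a b^n d \text{ in } M(X),
\qquad
n \notin X \iff a b^n c = a b^n e \text{ in } M(X).
\]
The key observation, and the one place where the generator $e$ is used in an essential way, is that the second equivalence makes \emph{non-membership} in $X$ a positive instance of the word problem, not merely a negative one.

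With these two equivalences in hand the remainder is bookkeeping with many-one reductions. Reading the first equivalence and its contrapositive, the map $n \mapsto (a b^n c,\, a b^n d)$ is a many-one reduction of $X$ to the word problem of $M(X)$ and also of $\mathbb{N} \setminus X$ to the complement of that word problem; reading the second, the map $n \mapsto (a b^n c,\, a b^n e)$ is a many-one reduction of $\mathbb{N} \setminus X$ to the word problem and of $X$ to its complement. Thus both $X$ and $\mathbb{N} \setminus X$ are many-one reducible to the word problem of $M(X)$, and both are many-one reducible to its complement. Now if the word problem for $M(X)$ is recursively enumerable then (using the first pair of reductions) both $X$ and $\mathbb{N}\setminus X$ are recursively enumerable; if instead it is co-recursively enumerable, then its complement is recursively enumerable and (using the second pair) again both $X$ and $\mathbb{N}\setminus X$ are recursively enumerable. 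Either way $X$ is recursive, and Proposition~\ref{prop_complexity} then shows the word problem for $M(X)$ is decidable, hence both recursively enumerable and co-recursively enumerable, completing the equivalence.

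I do not anticipate a genuine obstacle here. The only point requiring a little care is that the argument must use honest many-one reductions, which transport both "recursively enumerable" and "co-recursively enumerable", rather than the Turing equivalence of Proposition~\ref{prop_complexity}, which would not by itself transport recursive enumerability in either direction.
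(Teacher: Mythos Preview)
Your argument is correct and follows essentially the same route as the paper: both proofs exploit the $d$/$e$ symmetry to show that recursive enumerability of the word problem forces both $X$ and $\mathbb{N}\setminus X$ to be recursively enumerable, hence $X$ recursive, and then invoke Proposition~\ref{prop_complexity}. The only cosmetic difference is that the paper packages the key observation as the isomorphism $M(X)\cong M(\mathbb{N}\setminus X)$ obtained by swapping $d$ and $e$, whereas you unpack it directly as the equivalence $n\notin X \iff ab^n c = ab^n e$ and phrase everything in terms of explicit many-one reductions.
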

\begin{proof}
If the word problem for $M(X)$ is recursively enumerable, then by
the same argument as in the first part of the proof of
Proposition~\ref{prop_complexity}, so is $X$. Now notice that $M(X)$
is isomorphic to $M(\mathbb{N} \setminus X)$, via the map exchanging
$d$ and $e$. Since a recursively enumerable word problem is an isomorphism
invariant, the word problem for
 $M(\mathbb{N} \setminus X)$
  is recursively enumerable, and by the
same argument as above, so is $\mathbb{N} \setminus X$. Thus, $X$ is
recursive, and so by Proposition~\ref{prop_complexity}, $M(X)$ has
solvable word problem. A dual argument shows that $M(X)$ is
co-recursively enumerable if and only if it is recursive.
\end{proof}

\begin{proposition}\label{prop_trees}
For any subsets $X$ and $Y$ of $\mathbb{N}$, the semigroups $M(X)$ and
$M(Y)$ are isometric to each other, and to a directed rooted tree in
which every vertex has outdegree $4$ or $5$.
\end{proposition}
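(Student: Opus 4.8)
The plan is to analyze the structure of the Cayley graph of $M(X)$ directly from the rewriting system exhibited in the proof of Proposition~\ref{prop_complexity}, and show it does not depend on $X$. First I would recall that the presentation is homogeneous and yields a confluent, terminating rewriting system in which every rule has the form $ab^i c \to ab^i (\text{one of } d,e)$; in particular every rule rewrites a word of length $i+2$ to another word of length $i+2$, so the normal form of a word has the same length as the word itself, and the monoid is graded by length. The normal forms are precisely the words over $\{a,b,c,d,e\}$ containing no factor of the form $ab^i c$; equivalently, a word is a normal form unless it contains an occurrence of $c$ immediately preceded by a block $ab^i$ with $i \ge 0$. So the set of normal forms is the same regardless of $X$ — what changes with $X$ is only \emph{which} of $d$ or $e$ a given reducible word maps to, i.e.\ the labelling, not the underlying set of elements.

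Next I would describe the right Cayley graph on the set $N$ of normal forms. Given a normal form $w$ and a generator $s \in \{a,b,c,d,e\}$, the product $ws$ reduces as follows: appending $a$, $d$, or $e$ never creates a factor $ab^i c$, so $wa$, $wd$, $we$ are already in normal form; appending $b$ creates no new $c$, so $wb$ is in normal form; appending $c$ may create a factor $ab^i c$ precisely when $w$ ends in a block $ab^i$ ($i \ge 0$) that is ``exposed'', in which case $wc$ reduces to $w$ with the trailing $c$ replaced by $d$ or by $e$ depending on $X$, but in either case the \emph{target element} is $w'$, the normal form obtained by deleting the final $c$ and appending $d$ or $e$ — and crucially this target is a normal form of length $|w|+1$ that is already reachable from $w$ by appending $d$ or by appending $e$. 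So from the vertex $w$ the edge labelled $c$ either goes to the genuinely new vertex $wc$ (when $w$ does not end in an exposed $ab^i$) or else is a ``parallel'' edge to a vertex also hit by the $d$-edge or $e$-edge. This shows every vertex has out-degree $5$ in the multigraph but out-degree $4$ or $5$ in the underlying simple graph (it is $4$ exactly when the $c$-edge coincides with the $d$- or $e$-edge), and that the graph is a tree: each normal form $w$ of length $n \ge 1$ has a unique in-neighbour, namely the normal form of length $n-1$ obtained by deleting the last letter of $w$ (one checks that deleting the last letter of a normal form yields a normal form, and that this is the unique predecessor because the grading forbids edges within a level and the only way to reach $w$ is to append its last letter to its truncation). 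The empty word $\varepsilon$ is the root, with no in-edges.

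Then I would make the isometry explicit. Define, for each $X$, a map $\Phi_X$ from $N$ to the fixed abstract directed rooted tree $T$ in which the root has the children determined by the above combinatorics — but more simply, observe that the bijection between $N$ and itself is the identity, and that the \emph{unlabelled} directed graph structure on $N$ described in the previous paragraph makes no reference to $X$: the edge set is $\{(w, wa),(w,wb),(w,wc),(w,wd),(w,we)\}$ with the understanding that when $w$ ends in an exposed $ab^i$ the vertex $wc$ is identified with $w$'s $d$-neighbour or $e$-neighbour, and although \emph{which} identification occurs depends on $X$, the resulting isomorphism type does not, because in all cases the local picture at $w$ is: four or five out-edges to distinct length-$(|w|+1)$ normal forms, with the out-degree being $4$ iff $w$ ends in an exposed block $ab^i$ and being $5$ otherwise, and these ``exposed block'' vertices form the same subset of $N$ for every $X$. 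Hence the identity map on $N$ is an isomorphism of directed graphs between the Cayley graph of $M(X)$ and the Cayley graph of $M(Y)$; since graph distance in a directed graph is determined by the graph structure, this isomorphism is an isometry of the associated semimetric spaces. Finally, $T$ being a directed rooted tree with all out-degrees in $\{4,5\}$ follows from the two facts established above: the unique-in-neighbour property (tree, rooted at $\varepsilon$) and the out-degree count.

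The main obstacle I expect is the bookkeeping around the $c$-edges: one must verify carefully that (a) deleting the final letter of any normal form gives a normal form and is the unique in-neighbour (so that the graph really is a tree and not merely acyclic), and (b) the set of vertices $w$ for which the $c$-edge coincides with the $d$- or $e$-edge — equivalently, for which $w$ ends in a maximal exposed block $ab^i$ with no further letters after it — is independent of $X$, and at such a vertex the out-degree drops to exactly $4$ (never $3$), i.e.\ the $d$-neighbour and $e$-neighbour of $w$ are always distinct and only one of them absorbs the $c$-edge. Both are finite combinatorial checks on words, but they are where the argument could go wrong if the structure of normal forms were mishandled, so I would state them as explicit claims and verify them before concluding.
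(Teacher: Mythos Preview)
Your proposal is correct and follows essentially the same route as the paper: identify the $X$-independent set of normal forms $\mathcal{N} = A^* \setminus \bigcup_i A^* ab^i c A^*$ from the rewriting system, compute the out-neighbours of each normal form to see that the \emph{set} of neighbours (hence the simple directed Cayley graph) is independent of $X$ with out-degree $4$ when $u$ ends in a block $ab^i$ and $5$ otherwise, and conclude that the identity on $\mathcal{N}$ is an isometry. You are in fact more explicit than the paper about why the graph is a tree (the unique in-neighbour argument via the length grading) and about the bookkeeping in your points (a) and (b), but these are refinements of the same argument rather than a different approach.
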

\begin{proof}
It follows from the confluence and termination of the rewriting system
in the proof of Proposition~\ref{prop_complexity} that
$$\mathcal{N} = A^* \setminus (A^* ab^ic A^*) \ (i \in \mathbb{N}), \ \ \ \ \textrm{ where } A = \{a,b,c,d,e\}$$
is a set of unique normal forms for the elements of $M(X)$. Note that this set
is independent of the choice of the $X$. Moreover, given a normal form $u$,
the normal forms for elements of the form $ux$ with $x$ a generator are:
\begin{itemize}
\item $ua, ub, ud, ue$ if $u = u' a b^i$ for some $u' \in A^*$ and $i \in \mathbb{N}$; or
\item $u a$, $ ub $, $uc$, $ud$ and $ue$, otherwise.
\end{itemize}
From this is it immediate that the Cayley graph of $M(X)$ is a rooted
directed tree in which every vertex has outdegree $4$ or $5$. Notice,
moreover, that the normal forms to which the normal form $u$ is connected
in the Cayley graph are independent of $X$. It follows that the identity
map on normal forms induces an isometry between $M(X)$ and $M(Y)$ for any
subsets of $X$ and $Y$ of $\mathbb{N}$.
\end{proof}

We can also use this example to show that finite presentability is not a
quasi-isometry invariant of finitely generated monoids considered with the
(symmetric) metric induced by its Cayley graph regarded as an undirected
graph. An immediate corollary of Proposition~\ref{prop_trees} is that the
\textbf{undirected} Cayley graphs of the monoids of the form $M(X)$ are
all isometric, and are all trees in which every vertex has degree $5$
or $6$.

It is a well-known, if at first a little surprising, fact in geometric
group theory that any free group of finite rank exceeding $2$ is 
quasi-isometric to the free group of rank $2$. This is actually a special
case of a more general phenomenon involving quasi-isometries between
locally finite trees. 

The simplest non-elementary Gromov hyperbolic metric spaces are homogeneous
simplicial trees $T$ of constant valency $\geq 3$. One interesting feature
of such
geometries is that all trees with constant
valency $\geq 3$ are quasi-isometric to each other.
Indeed, as observed in  \cite[Section~2.1]{Mosher03} and \cite{Alvaro12}, each such tree is quasi-isometric
to any tree $T$ satisfying the following properties: 
\begin{itemize}
\item $T$ has bounded
valency, meaning that vertices have uniformly finite valency; and
\item $T$ is \emph{bushy},
meaning that each point of $T$ is a uniformly bounded distance from a vertex
having at least $3$ unbounded complementary components. 
\end{itemize}
\begin{lemma}
\label{lem_busy}
Let $T_1$ and $T_2$ be locally finite graph theoretic trees. If $T_1$ and $T_2$ have bounded degree and every vertex in $T_1$ and in $T_2$ has degree at least $3$ then $T_1$ and $T_2$ are quasi-isometric.
\end{lemma}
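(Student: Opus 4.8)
The plan is to verify the two bulleted properties from the preceding discussion (bounded valency, and bushiness) for any locally finite tree in which every vertex has bounded degree and degree at least $3$, and then invoke the cited quasi-isometry classification of bushy trees. Bounded valency is immediate by hypothesis. For bushiness, I would argue that in a tree $T$ in which every vertex has degree at least $3$, every vertex $v$ itself is a point with at least $3$ unbounded complementary components: deleting $v$ splits $T$ into at least $3$ subtrees (one for each edge at $v$), and I must check each such subtree is unbounded. This is the technical heart of the argument.

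First I would set up the following sublemma: in a locally finite tree in which every vertex has degree $\geq 3$, for every vertex $v$ and every edge $e$ incident to $v$, the connected component of $T \setminus \{v\}$ containing the other endpoint of $e$ is infinite, hence unbounded. I would prove this by constructing an infinite ray: starting from the endpoint $w \ne v$ of $e$, at each step we are at a vertex $u$ having degree $\geq 3$, so (since the path so far uses at most one edge at $u$, in the tree) there is at least one unused edge leading to a new vertex; continuing indefinitely produces an infinite injective ray (injectivity because $T$ is a tree, so no vertex repeats), and this ray never returns to $v$ (again because $T$ is a tree: there is a unique path, so the component of $T\setminus\{v\}$ it lives in is fixed). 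Hence each of the $\geq 3$ components at $v$ is unbounded, so every vertex of $T$ is a bushiness witness (at distance $0$ from itself), and the uniform bound required for bushiness is $0$. Combined with the bounded-degree hypothesis, $T_1$ and $T_2$ both satisfy the two conditions, so each is quasi-isometric to a regular tree of constant valency $\geq 3$ (e.g.\ valency $3$), and since quasi-isometry is an equivalence relation, $T_1$ and $T_2$ are quasi-isometric to each other.

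The main obstacle is simply making the informal appeal to the cited results (\cite[Section~2.1]{Mosher03}, \cite{Alvaro12}) precise: those sources phrase bushiness for geodesic metric spaces and talk about "complementary components" of points rather than of vertices, so I would need a sentence reconciling the graph-metric picture (a tree realised as a metric space with unit-length edges) with the combinatorial statement, noting that a point on an edge has exactly two complementary components while the endpoints of an edge have degree $\geq 3$ nearby, so every point of $T$ is within distance $1$ of a vertex with $\geq 3$ unbounded complementary components. Everything else — the ray construction and the invocation of transitivity of quasi-isometry — is routine. I would also remark that local finiteness is what guarantees the metric space is proper/geodesic so that the cited trichotomy applies, though for our purposes (trees of bounded degree) this is automatic.
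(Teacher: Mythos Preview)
Your proposal is correct and takes essentially the same approach as the paper: the paper's proof consists of the single sentence ``This follows from the fact that $T_1$ and $T_2$ are both bushy trees,'' invoking exactly the classification you cite. You have simply spelled out the verification of bushiness (via the ray construction) and the reconciliation between the combinatorial and metric pictures that the paper leaves implicit.
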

\begin{proof}
This follows from the fact that $T_1$ and $T_2$ are both bushy trees. 
\end{proof}

\begin{theorem}
\label{thm_unqsi}
For every subset $X$ of the natural numbers, the undirected Cayley graph of the finitely generated monoid $M(X)$ (defined above) is quasi-isometric to the Cayley graph of the free group $F_2$. 

In particular finite presentability is not an undirected quasi-isometry invariant of finitely generated monoids. 
\end{theorem}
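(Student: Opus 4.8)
The plan is to read off the quasi-isometry from Proposition~\ref{prop_trees} together with Lemma~\ref{lem_busy}, and then to obtain the ``in particular'' clause by a cardinality count against Proposition~\ref{prop_complexity}. First I would identify the undirected Cayley graph of $M(X)$ geometrically. By Proposition~\ref{prop_trees} the (directed) Cayley graph is a directed rooted tree in which every vertex has outdegree $4$ or $5$; since all defining relations of $M(X)$ are length-preserving, multiplication by a generator strictly increases length, so there are no loops and no anti-parallel pairs of arcs, and the normal-form list in the proof of Proposition~\ref{prop_trees} shows the out-neighbours of each vertex are pairwise distinct. Hence the underlying undirected graph is a genuine simple graph-theoretic tree: the root has degree equal to its outdegree, and every other vertex acquires exactly one incoming edge, so every vertex has degree $5$ or $6$. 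In particular this tree is locally finite, has bounded degree (at most $6$), and every vertex has degree at least $3$. The Cayley graph of $F_2$ with respect to its standard free generating set $\{x^{\pm1},y^{\pm1}\}$ is the $4$-regular tree, which is likewise a locally finite, bounded-degree tree with every vertex of degree at least $3$. Lemma~\ref{lem_busy} applied to these two trees yields at once that they are quasi-isometric, which proves the first assertion.

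For the second assertion it suffices to exhibit a single $X \subseteq \mathbb{N}$ for which $M(X)$ is not finitely presentable, since $F_2$ is finitely presentable. Rather than construct an explicit such $X$, I would argue by counting. There are only countably many finite monoid presentations, hence only countably many isomorphism classes of finitely presentable monoids. On the other hand, by Proposition~\ref{prop_complexity} the word problem of $M(X)$ is Turing equivalent to the membership problem of $X$, so an isomorphism $M(X) \cong M(Y)$ (which carries a finite generating set to a finite generating set, under which solvability and indeed Turing degree of the word problem are preserved) forces $X \equiv_T Y$; since each Turing degree consists of only countably many subsets of $\mathbb{N}$ whereas there are $2^{\aleph_0}$ subsets altogether, the monoids $M(X)$ fall into $2^{\aleph_0}$ isomorphism classes. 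Therefore uncountably many of them are not finitely presentable, while by the first part every one of them is undirected-quasi-isometric to the finitely presentable group $F_2$. Hence finite presentability is not an undirected quasi-isometry invariant of finitely generated monoids.

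The only genuinely delicate points are bookkeeping ones: verifying that passing to the undirected graph really produces a simple tree satisfying the hypotheses of Lemma~\ref{lem_busy} (routine, given Proposition~\ref{prop_trees}), and noting for the counting argument that a Turing degree contains only countably many sets, since each set reducible to a fixed oracle is computed by one of countably many oracle machines. If one preferred an explicit non-finitely-presentable witness to the cardinality argument, one could instead try to prove directly --- along the lines of the Reidemeister--Schreier computation invoked for \cite{Ruskuc00} in the previous example --- that $M(X)$ fails to be finitely presentable whenever $X$ is infinite and coinfinite; but the counting argument is shorter and entirely self-contained given the results already established.
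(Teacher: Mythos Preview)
Your argument for the first assertion is correct and coincides with the paper's: both deduce from Proposition~\ref{prop_trees} that the undirected Cayley graph is a tree with vertex degrees $5$ or $6$, and then invoke Lemma~\ref{lem_busy} against the $4$-regular tree for $F_2$. (One small bookkeeping point: the root in fact has outdegree exactly $5$, since the empty word is not of the form $u'ab^i$; so your conclusion ``every vertex has degree $5$ or $6$'' holds without exception, and the apparent tension with ``the root has degree equal to its outdegree'' disappears.)

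For the ``in particular'' clause the paper's proof gives no explicit argument, so your cardinality argument genuinely adds content. It is correct as written: isomorphic finitely generated monoids have word problems of the same Turing degree, Proposition~\ref{prop_complexity} then forces $X\equiv_T Y$, and each Turing degree contains only countably many sets. A shorter variant, however, avoids counting isomorphism classes entirely. Any finitely presented monoid has recursively enumerable word problem (enumerate derivations), so if $M(X)$ were finitely presentable then the first reduction in Proposition~\ref{prop_complexity} --- checking whether $ab^nc=ab^nd$ --- would exhibit $X$ as r.e.; since only countably many subsets of $\mathbb{N}$ are r.e., some (indeed continuum many) $M(X)$ fail to be finitely presentable. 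This uses only the easy direction of Proposition~\ref{prop_complexity} and sidesteps the oracle-machine count.
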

\begin{proof}
We observed above that the right directed Cayley graph $\Gamma(M(X))$ is a directed rooted tree in which every vertex has in degree $1$, and out degree either $4$ or $5$. It follows that the corresponding undirected Cayley graph is a tree in which every vertex has degree $5$ or $6$. Now the result follows by Lemma~\ref{lem_busy} since the undirected Cayley graphs of $M(X)$ and the free group $F_2$ are both bushy trees. 
\end{proof}

\bibliographystyle{plain}

\def\cprime{$'$} \def\cprime{$'$}

\end{document}